\documentclass[12pt,a4paper,
fleqn,reqno]{amsart}                 
\usepackage[T1]{fontenc}                   
\usepackage[utf8]{inputenc}                 
\usepackage[english]{babel}    
 \usepackage{csquotes}
\usepackage{graphicx}                      %
\usepackage[font=small]{quoting}    
\usepackage{float}
\usepackage{caption}  

\usepackage[top=1.4in,bottom=1.8in,left=1.0in,right=1.0in]{geometry}                  
\usepackage{verbatim}
\usepackage{color}
\usepackage{xcolor}
\usepackage{picture}
\usepackage{amsmath,amsthm,amsfonts,amssymb}
\usepackage{comment}
\usepackage{hyperref}
\hypersetup{colorlinks,linkcolor={black}, citecolor={blue},urlcolor={black}}   
\usepackage{enumitem}
\usepackage{mathtools}
\usepackage{wasysym}
\usepackage[url=false,doi=false,isbn=false, giveninits=true,
style=
numeric,
maxbibnames=99]{biblatex}
\newtheorem{thm}{Theorem}[section] 
\newtheorem{lem}[thm]{Lemma} 
\newtheorem{cor}[thm]{Corollary} 
\newtheorem{prop}[thm]{Proposition}

\newtheorem{defn}{Definition}[section]

\theoremstyle{definition} 
\newtheorem{rem}{Remark}[section]

\theoremstyle{definition}

\usepackage{esint}

\usepackage{amsmath}

\def\n{\nabla}

\def\a{\alpha}

\def\n{\nabla}

\def\a{\alpha}

\def\l{\lambda}
\def\s{\sigma}

\def\n{\nabla}
\def\<{\langle}
\def\>{\rangle}

\def\n{\nabla}

\def\RR{\mathbb{R}}

\def\a{\alpha}

\def\l{\lambda}

\def\s{\sigma}

{\left\{\begin{array}{@{}l@{}}}{\end{array}\right.}
\patchcmd{\abstract}{\scshape\abstractname}{\textbf{\abstractname}}{}{}
\makeatletter 
\def\@makefnmark{} 
\makeatother

\numberwithin{equation}{section}
\numberwithin{exa}{section}

\begin{document}
\title[Interior hessian estimate]{Interior $C^{2}$ estimate for semi-convex solutions to a class of Hessian quotient equations in arbitrary dimensions}
 
\author[X. Mei]{Xinqun Mei}
\address[X. Mei]{Key Laboratory of Pure and Applied Mathematics, School of Mathematical Sciences, Peking University,  Beijing, 100871, P.R. China}
\email{\href{qunmath@pku.edu.cn}{qunmath@pku.edu.cn}}
	
\author[J. Yan]{Jin Yan}
\address[J. Yan]{School of Mathematical Sciences, University of Science and Technology of China, Hefei,  Anhui Province,230026, P.R. China}
\email{\href{yjoracle@mail.ustc.edu.cn}{yjoracle@mail.ustc.edu.cn}}

\subjclass[2020]{Primary: 35B45. Secondary: 35J15, 35J60}

\keywords{Fully nonlinear elliptic equation, A priori estimates, Singular solution, Rigidity}

\begin{abstract}
In this paper, we study the interior $C^{2}$ estimates for Hessian quotient equations $\frac{\sigma_{3}(D^{2}u)}{\sigma_{l}(D^{2}u)}=1$ for $l=1, 2$, in arbitrary dimensions, under the natural ellipticity and semi-convexity conditions. We further derive analogous results  for the corresponding sum Hessian equations. In addition, we establish several rigidity results.
  
\end{abstract}

\maketitle


\section{Introduction}
In this paper, we study  interior $C^{2}$ estimates  for Hessian quotient equation
\begin{eqnarray}\label{quo-eq}
\frac{\sigma_{3}(D^{2}u)}{\sigma_{l}(D^{2}u)} =f, \quad \text{in } B_5\in\mathbb R^n,
\end{eqnarray}
for $l=1, 2$. For $\lambda\in \RR^{n}$, let $\sigma_{l}(\lambda)$ denote the $l$-th elementary symmetric function defined by 
\begin{eqnarray*}
\s_{l}(\lambda)\coloneqq  \sum _{1 \le i_1 < i_2 <\cdots<i_l\leq n}\lambda_{i_1}\lambda_{i_2}\cdots\lambda_{i_l}.
\end{eqnarray*}
Let $\lambda(D^{2}u)$ denote the eigenvalues of $D^{2}u$,  the $l$-Hessian operator of $u$ is defined by $\sigma_{l}(D^{2}u)\coloneqq\sigma_{l}(\lambda(D^2{u}))$.  Recall that the  G{\aa}rding cone is given by
\begin{eqnarray*} 
\Gamma_l \coloneqq \left\{ \lambda  \in \mathbb{R}^n \mid \sigma _i (\lambda ) > 0,~~\forall 1 \le i \le l \right\}.
\end{eqnarray*} 
We say a function $u$ is an admissible solution for Eq. \eqref{quo-eq} if $\lambda(D^{2}u)\in \Gamma_{3}$. We call $u$  semi-convex if there exists a constant $K >0$ such that $D^{2}u>-KI$,
where $I$ is the identity matrix. When $l=1,2$, we  establish the interior $C^{2}$ estimates for admissible, semi-convex solutions to Eq. \eqref{quo-eq}.
\begin{thm}\label{thm-quotient}
Let $l=1,2$ and $n\geq3$. Let $u\in C^4(B_5)$ be an admissible, semi-convex solution of
\begin{eqnarray}\label{3-quo-eq}
\frac{\sigma_3(D^2u)}{\sigma_l(D^2u)}=1,\quad \text{in } B_5\in\mathbb R^n.
\end{eqnarray}
Then 
\begin{eqnarray}\label{0-est-quotient}
|D^2u(0)|\leq C,
\end{eqnarray}   
where $C$ depends only on $n$, $K$, and $\|u\|_{C^{1}(B_5)}$.    
\end{thm}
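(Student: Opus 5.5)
We prove the estimate by a Pogorelov-type maximum principle argument on a cutoff test function. This follows the approach used for interior estimates of $\sigma_2$ and $\sigma_3$ equations under semi-convexity (Shankar–Yuan, Lu, Qiu), adapted to the quotient operator $F(D^2u)=\log\sigma_3(D^2u)-\log\sigma_l(D^2u)=0$.

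\textbf{Setup.} Write $\lambda_1\ge\cdots\ge\lambda_n$ for the eigenvalues of $D^2u$, so $\lambda_i>-K$ by semi-convexity. Admissibility gives $\lambda\in\Gamma_3$, and $F$ is elliptic and concave on $\Gamma_3$. We consider
\[
\phi=\rho(x)\,\big(\lambda_1+K\big)^{\beta}\exp\!\Big(\tfrac{a}{2}|Du|^2+b\,x\cdot Du-bu\Big)\quad\text{or simply}\quad \phi=\rho\, b(\lambda_1)\,e^{\frac a2|Du|^2},
\]
with $\rho=(1-|x|^2)^{\gamma}$ or $\rho=25-|x|^2$ on $B_5$, and $b(\lambda_1)=\log(\lambda_1+K)$ or a power. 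Since $\lambda_1$ may fail to be smooth, we perturb $D^2u$ at the maximum point $x_0$ in the standard way so that $\lambda_1$ is simple there, and diagonalize $D^2u(x_0)$.

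\textbf{Inequality at $x_0$.} Differentiate the equation twice in the $e_1$ direction. Concavity of $F$ together with the explicit second derivatives of $\sigma_3$ and $\sigma_l$ lets us control the third-order terms, giving
\[
F^{ii}u_{11ii}\ \ge\ -F^{pq,rs}u_{pq1}u_{rs1} \ \ge\ \text{good terms}.
\]
At $x_0$ we use $\phi_i=0$ and $F^{ii}\phi_{ii}\le0$. Writing $\mathcal F=\sum F^{ii}$, the gradient term contributes $aF^{ii}\lambda_i^2$. The quantity $x\cdot Du-u$ contributes $bF^{ii}\lambda_i$, and this is controlled from below because $F^{ii}\lambda_i=\tfrac{3-l}{1}$ by homogeneity. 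The cutoff contributes $-C\mathcal F/\rho$ and $-C\,F^{ii}\rho_i^2/\rho^2$. So the basic inequality we aim for is
\[
0\ \ge\ \beta\,\frac{F^{ii}u_{11i}^2\ \text{(remaining)}}{\lambda_1^2}+a\,F^{ii}\lambda_i^2-\frac{C\mathcal F}{\rho}-\frac{C F^{ii}\rho_i^2}{\rho^2}.
\]

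\textbf{Main obstacle: third-order terms.} The hard step is absorbing the bad term $-\beta F^{11}u_{111}^2/\lambda_1^2$ and the cross terms $F^{ii}u_{11i}^2/\lambda_1^2$ for $i\ge2$. We need the refined concavity inequality
\[
-F^{pq,rs}u_{pq1}u_{rs1}\ \ge\ (1+c)\sum_{i\ge2}\frac{F^{ii}u_{11i}^2}{\lambda_1}\ -\ \text{(controllable)},
\]
valid when $\lambda_1$ is large. For $\sigma_3/\sigma_l$ this follows from the Guan–Ren–Wang type inequality for $\sigma_k$, combined with the quotient structure. Semi-convexity enters here: since $\lambda_i\ge-K$, we get the lower bound $F^{11}\lambda_1^2\ge c\,\mathcal F$ type estimates, and also $F^{ii}\ge c\,\mathcal F$ for those $i$ with $\lambda_i\le C$. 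I would split into two cases.
\begin{enumerate}
\item[(i)] If $\lambda_2\le\delta\lambda_1$, use the refined concavity together with $\phi_i=0$ to absorb the $i\ge2$ terms. The $i=1$ term is handled by $F^{11}\lambda_1^2$ dominating via the $a$-term.
\item[(ii)] If $\lambda_2>\delta\lambda_1$, then (as in the $\sigma_3$ semi-convex case) $\sigma_3/\sigma_l=1$ and $\lambda_i\ge -K$ force $\lambda_3$ to be bounded. This makes $F^{11}$ comparable to $\mathcal F/\lambda_1$ up to constants, and $aF^{ii}\lambda_i^2\ge a F^{22}\lambda_2^2\gtrsim \mathcal F\lambda_1$, which beats $C\mathcal F/\rho$.
\end{enumerate}
In both cases, using $\mathcal F\ge c>0$ (from the quotient structure), we arrive at $\rho\lambda_1\le C$ at $x_0$. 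This yields \eqref{0-est-quotient}.

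I expect case (ii) to be the key difficulty in arbitrary dimension. Its verification relies on algebraic lemmas bounding $\lambda_3$ and estimating $\sigma_2(\lambda|1)$ and $\sigma_{l-1}(\lambda|1)$, which I would establish first.
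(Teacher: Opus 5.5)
Your argument does not close, and the failure is a wrong order of magnitude in the final step. For solutions of these equations, $F^{ii}(\lambda_i+K+1)^2$ is comparable to $\mathcal F=\sum_jF^{jj}$ for \emph{every} $i$. The reason is the identity $\sigma_3(M+aI)-C_0\sigma_l(M+aI)=\sigma_3(M)+\alpha\sigma_2(M)-1$ (after rescaling). It gives $F^{ij}=S^{ij}(D^2v)/\sigma_3(D^2u)$ with $v=u-\frac a2|x|^2$, and the comparability is then exactly the uniform ellipticity of Lemma~\ref{lem-uniform-elliptic}.

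You can check this directly. Take $l=2$, $\lambda_1\sim\lambda_2\to\infty$, and $\lambda_3,\dots,\lambda_n$ bounded. The equation forces $\sigma_{1;12}=1+O(\lambda_2^{-1})$. Hence $\mathcal F=(n-2)-(n-1)\sigma_1/\sigma_2\to n-2$, while $F^{22}\lambda_2^2=1-\sigma_{2;12}+O(\lambda_2^{-1})$ stays bounded. So in your case (ii) $F^{22}\lambda_2^2\sim\mathcal F$, not $\mathcal F\lambda_1$, and $F^{11}\sim\mathcal F/\lambda_1^2$, not $\mathcal F/\lambda_1$. In case (i), likewise, $F^{11}\lambda_1^2\sim\mathcal F$.

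Consequently the only positive second-order term you have, $a\sum_iF^{ii}\lambda_i^2$, is at most $Ca\mathcal F$. The cutoff, by contrast, contributes a term of order $\mathcal F/\rho$; for instance $\sum_iF^{ii}(\log\rho)_{ii}\le-2\mathcal F/\rho$ for $\rho=25-|x|^2$. This is genuinely of that size because $F^{ii}\sim\mathcal F$ for $i\ge3$. At $x_0$ you therefore get at best $\rho\le C/a$, which says nothing about $\lambda_1$, so $\rho\lambda_1\le C$ does not follow.

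Any real gain would have to come from the third-order terms via $\phi_i=0$, e.g.\ a $\log\lambda_1$ factor in front of $F^{ii}(\rho_i/\rho+\cdots)^2$. But $F^{ij}$ degenerates in the $e_1$ direction, so such terms need not dominate $\mathcal F/\rho$. Your sketch gives no mechanism for this, and it is exactly the difficulty that pushes the paper (following Shankar--Yuan and Lu) to the Legendre transform.

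Separately, your refined concavity inequality for $\sigma_3/\sigma_l$ is only asserted. The paper points out that such an inequality is not available for quotient operators with $k\le n-1$. Moreover, what is actually needed is a gain $(1+\gamma)F^{11}u_{111}^2/\lambda_1$, not control of the $i\ge2$ terms you wrote; those are handled by the second variation of $\lambda_1$.

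The paper sidesteps both problems as follows.
\begin{enumerate}
\item It sets $v=u-\frac a2|x|^2$ with explicit $a$ and checks via Newton--Maclaurin that $v$ is semi-convex with $\lambda(D^2v)\in\tilde\Gamma_3$, solving $\sigma_3(D^2v)+\alpha\sigma_2(D^2v)=1$.
\item For that equation it derives a Jacobi inequality for $\log\lambda_1$ from the Dong--Xu--Zhang concavity inequality.
\item It passes to the Legendre--Lewy transform of $u+\frac{K_1}{2}|x|^2$. There the comparability $F^{ii}(K_1+u_{ii})^2\sim\sigma_2+\sigma_1$, which defeats your pointwise argument, becomes uniform ellipticity. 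This gives a mean-value bound of $\tilde b(0)$ by $\int_{B_1}\tilde b\,\det(D^2u+K_1I)$.
\item It bounds that integral by integration by parts using Lemma~\ref{lem-crucial-ineq}.
\end{enumerate}
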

As a direct consequence of the above theorem, we can derive some rigidity results for Hessian quotient equation. Before that, we introduce the definition of sub(super)-quadratic growth.
\begin{defn}
    we say a function $u: \RR^{n}\rightarrow \RR$ has sub-quadratic growth if there exist some positive constants $b, c$ and sufficiently $R$, such that
    \begin{eqnarray*}
        |u(x)|\leq b|x|^{2}+c,\quad \text{for}\quad |x|>R.
    \end{eqnarray*}
    We say $u$ has super-quadratic growth, if $u$ satisfies 
    \begin{eqnarray*}
        u(x)\geq b|x|^{2}-c,\quad \text{for}\quad |x|>R.
    \end{eqnarray*}
\end{defn}
\begin{thm}\label{thm-quotient-rigidity}
    Let $n\geq3$ and $l=1,2$, and let $u\in C^4(\mathbb R^n)$ be an entire admissible, semi-convex solution with sub-quadratic growth satisfying
    \begin{eqnarray*}
       \frac{\sigma_{3}(D^{2}u)}{\sigma_{l}(D^{2}u)}=1, \quad \text{in}\quad \mathbb{R}^{n}. 
    \end{eqnarray*}
    Then $u$ must be a quadratic polynomial.
    \end{thm}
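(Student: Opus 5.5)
The plan is a standard scaling argument that feeds into Theorem \ref{thm-quotient}. The point is that the equation $\sigma_3/\sigma_l=1$ is \emph{not} scale invariant under $u\mapsto R^{-2}u(Rx)$: the Hessian is unchanged, but the quotient has homogeneity $3-l$. So the rescaled functions satisfy the equation with right-hand side different from $1$, and I will first need a version of Theorem \ref{thm-quotient} that tolerates this.

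\textbf{Step 1: rescale.} Fix $x_0\in\mathbb R^n$ and $R$ large, and set
\[
v_R(y)=R^{-2}u(x_0+Ry),\qquad y\in B_5 .
\]
Then $D^2v_R(y)=D^2u(x_0+Ry)$. Hence $v_R$ is admissible, semi-convex with the same constant $K$, and solves the same equation $\sigma_3(D^2v_R)/\sigma_l(D^2v_R)=1$ in $B_5$. So in fact no normalization of the right-hand side is needed: the equation is preserved exactly.

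\textbf{Step 2: uniform $C^1$ bounds.} Sub-quadratic growth gives $\sup_{B_{10}}|v_R|\le C$ uniformly in $R$ (for fixed $x_0$). For the gradient, I use semi-convexity: $v_R+\frac K2|y|^2$ is convex on $B_{10}$. A convex function has its gradient on $B_5$ bounded by its oscillation on $B_{10}$, so $\|v_R\|_{C^1(B_5)}\le C$ independently of $R$. Theorem \ref{thm-quotient} then gives $|D^2u(x_0)|=|D^2v_R(0)|\le C$ with $C$ independent of $x_0$. So $D^2u$ is globally bounded.

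\textbf{Step 3: Liouville step (main obstacle).} With $D^2u$ bounded, the equation is uniformly elliptic: on a compact subset of $\Gamma_3$ away from the boundary, the level set $\{\sigma_3/\sigma_l=1\}$ stays away from $\partial\Gamma_3$, since $\sigma_3\ge c>0$ there. The operator $F=(\sigma_3/\sigma_l)^{1/(3-l)}$ is concave on $\Gamma_3$. Using concavity together with the Evans--Krylov estimate in scaled form,
\[
[D^2u]_{C^\alpha(B_R)}\le CR^{-\alpha}\|D^2u\|_{L^\infty(B_{2R})},
\]
and letting $R\to\infty$, I conclude that $D^2u$ is constant. Hence $u$ is a quadratic polynomial.

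The delicate points are verifying uniform ellipticity, namely that $\lambda$ stays in a compact subset of $\Gamma_3$ on the solution set given bounded $\lambda$, and confirming concavity of the quotient operator. Both are standard properties of Hessian quotients, so I expect the main work to lie in Step 2's gradient bound and in invoking Theorem \ref{thm-quotient} uniformly in $R$.
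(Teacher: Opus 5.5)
Your argument is correct and is essentially the paper's. The paper proves this exactly as Theorem \ref{thm-rigidity}(1): it rescales $u_R(x)=u(Rx)/R^2$, which preserves the equation, and applies the interior $C^2$ estimate together with Evans--Krylov to get $[D^2u]_{C^\alpha(B_R)}\le CR^{-2-\alpha}\|u\|_{L^\infty(B_{3R})}\to0$. You split this into a global Hessian bound followed by the Liouville step, and you supply details the paper skips: the $C^1$ bound via semi-convexity, and uniform ellipticity and concavity of $(\sigma_3/\sigma_l)^{1/(3-l)}$ on the bounded solution set. Your opening remark that the dilation breaks scale invariance is wrong but harmless, since Step~1 itself observes that $D^2v_R(y)=D^2u(x_0+Ry)$, so the equation is preserved exactly.
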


Before our work, some results in the low dimensions were known. When $f=1$ and $l=1$, using the special Lagrangian structure of Eq. \eqref{quo-eq}, Chen--Warren--Yuan \cite{cmy} and Wang--Yuan \cite{wy} established \eqref{0-est-quotient} when $n=3$ and $n=4$, respectively. For nonconstant function $f$, Zhou \cite{zxc} established the interior $C^{2}$ estimates when $n=3,4$ by studying the twisted special Lagrangian equation and Lu \cite{lu-arxiv} used the Legendre transform and obtained similar results when $n=3$. For general Hessian quotient equations in high dimension, there is no special Lagrangian structure. It is therefore of great interest
to know whether interior $C^{2}$ estimates hold.  Recently, Lu \cite{L2025APDE} studied the interior $C^{2}$ estimates for a class of Hessian equations in general dimension. Precisely, for the Hessian quotient equation 
\begin{eqnarray}\label{hessian-quotient-equ}
    \frac{\sigma_{k}(D^{2}u)}{\sigma_{l}(D^{2}u)}=f,
\end{eqnarray}
where $1\leq l<k\leq n$. Lu \cite{L2025APDE} established the interior $C^{2}$ estimates for Eq. \eqref{hessian-quotient-equ} when $k=n$ and $l=n-1, n-2$. Moreover, he constructed a singular solution to Eq. \eqref{hessian-quotient-equ} with $k-l\geq 3$, which implies that  interior $C^{2}$ estimates fail when $k-l\geq 3$. Consequently, the remaining open question is 

\
	
	{\it 
	When $1\leq l<k\leq n$ and $l=k-1, k-2$, whether the interior $C^{2}$ estimates still hold for the solution to Eq. \eqref{hessian-quotient-equ} in arbitrary dimension ?   }

\

When $k=2, l=1$, Lu--Sroka \cite{ls} settled the case $f=1$ by rewriting the Hessian quotient operator $\frac{\s_{2}}{\sigma_{1}}$ as the $\s_{2}$ operator and using the results of Warren--Yuan \cite{wy-cpam} and Shankar--Yuan \cite{SY2020CVPDE,SY2025Annals}. Jiao-Sui \cite{jz} studied the case $f=f(x,u)$. Our results, namely, Theorem \ref{thm-quotient},  give an affirmative answer to this question for semi-convex solutions when $k=3$ and $l=1,2$.

Compared with the Hessian quotient operator, the  results on the study of  interior $C^{2}$ estimates for the Hessian equation are much more abundant. For the Hessian equation 
 \begin{eqnarray}\label{k-equ}
     \s_{k}(D^{2}u)=f,
 \end{eqnarray}
Heinz \cite{he} first established the interior $C^{2}$ estimates for $k=2$ and $n=2$, i.e., the Monge-Amp\`ere equation $\det(D^{2}u)=f$ in dimension $2$. See also recent proofs by Chen--Han--Ou \cite{cho} and Liu \cite{ljk}. 
When  $n\geq 3$ and $k\geq 3$, there are famous singular solutions constructed by Pogorelov \cite{pogo}, later generalized by Urbas \cite{urbas}, which implies the $C^{2}$ interior estimates fail. After that, whether the interior $C^{2}$ estimates hold for Eq. \eqref{k-equ} with $k=2$ is a longstanding problem. A major breakthrough was made by Warren--Yuan \cite{wy-cpam} and Shankar--Yuan \cite{SY2025Annals}, who showed the $C^{2}$ estimate for admissible solution to the $2$-Hessian equation $\s_{2}(D^{2}u)=1$ holds in $n=3$ and $n=4$, respectively. For general $n (n\geq 5)$,  the interior $C^{2}$ estimate was obtained under certain convex conditions on the solution, see, e.g., \cite{cjz, msy, mooney, SY2020CVPDE,  SY2025Annals}.   For general right-hand side $f$, Qiu \cite{qiu} and Fan \cite{fan} solved the problem for $n=3$ and $n=4$, respectively. For general dimension, Guan--Qiu \cite{gq} settled this problem under the condition $\sigma_{3}(D^{2}u)>-A$ for some positive constant $A$. For the interior $C^{2}$ estimates of curvature equation, we  refer the reader to \cite{qiu-curvature, qiu-zhou, suw2004, zxc-curvature} and  the references therein..
 
To prove Theorem \ref{thm-quotient}, a key ingredient is to establish a concavity inequality for the Hessian quotient operator $\frac{\sigma_{k}(D^{2}u)}{\sigma_{l}(D^{2}u)}$. In fact, this concavity inequality plays an important role in deriving various $C^{2}$ estimates for fully nonlinear elliptic equations. When $k=n$, the concavity inequality was established by Guan--Sroka \cite{gs}, see also Zhang \cite{Z2025AIM} for related results on the Hessian operator $\sigma_{k}(D^{2}u)$.  To the best of our knowledge, when $k\leq n-1$, the concavity inequality for Hessian quotient operator remains unknown.
Therefore, establishing the interior $C^{2}$ estimates via a direct analysis of Eq. \eqref{quo-eq} presents significant difficulties. To overcome this issue, we proceed to investigate the interior $C^{2}$ estimates for the following sum Hessian equation 
\begin{eqnarray}\label{sum-hessian}
   S_{k}(D^{2}u)\coloneqq \sigma_{k}(D^{2}u)+\sigma_{k-1}(D^{2}u)=f(x, u, Du).
\end{eqnarray}
Denote 
\begin{eqnarray*}
\tilde\Gamma_k=\Gamma_{k-1}\cap\{\lambda|\sigma_k(\lambda)+\sigma_{k-1}(\lambda)>0\}.
\end{eqnarray*}
 Li-Ren-Wang \cite{LRW2019CVPDE} proved that when $\lambda(D^{2} u)\in\tilde{\Gamma}_{k}$, the operator $S_{k}(D^{2}u)$ is elliptic. In this case,  we say a function $u$ is an admissible solution for Eq. \eqref{sum-hessian}. 
When $k=3$, we establish  interior $C^{2}$ estimates for admissible, semi-convex solution to Eq. \eqref{sum-hessian},  stated as follows. 
\begin{thm}\label{thm-main}
Let $n\geq3$ and let $f\in C^{1,1}(B_5\times\mathbb R\times \mathbb R^n)$ be a positive function. Let $u\in C^4(B_5)$ be an admissible, semi-convex solution of equation
\begin{eqnarray}\label{main-eq}
\sigma_{3}(D^{2}u)+\sigma_{2}(D^{2} u)=f(x, u, Du),\quad \text{in } B_5\in\mathbb R^n.
\end{eqnarray}
Then we have
\begin{eqnarray}\label{0-est}
|D^2u(0)|\leq C,
\end{eqnarray}   
where $C$ depends only on $n$, $K$, $\|u\|_{C^{1}(B_5)}$, $\min\limits_{B_5\times\mathbb R\times\mathbb R^n}f$ and $\|f\|_{C^{1,1}(B_5\times\mathbb R\times\mathbb R^n)}$.
\end{thm}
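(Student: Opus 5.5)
We prove \eqref{0-est} by a Pogorelov-type maximum principle argument (as in Guan--Qiu and Shankar--Yuan), with a test function built on the largest eigenvalue. Since $u$ is semi-convex, $\lambda_i\geq -K$ for all eigenvalues. Moreover, $|Du|\leq C$ in $B_5$ by assumption. Write $F(D^2u)=\sigma_3+\sigma_2$ and $F^{ij}=\partial F/\partial u_{ij}$. Set $\lambda_1=\lambda_{\max}(D^2u)$ and consider
\begin{eqnarray*}
P(x)=\eta(x)^{\beta}\,\lambda_1(x)\,\exp\Big(\frac{a}{2}|Du|^2+\frac{b}{2}|x|^2\Big),\qquad \eta=1-|x|^2,
\end{eqnarray*}
on $B_1$, possibly replacing $\lambda_1$ by $\log\lambda_1$ and adding $\epsilon u$. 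The constants $a,b,\beta$ are chosen later. At an interior maximum point $x_0$ we diagonalize $D^2u$ with $\lambda_1\geq\cdots\geq\lambda_n\geq -K$ and assume $\lambda_1$ is large. If $\lambda_1$ has multiplicity, we perturb it in the standard way so that it is smooth.

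\textbf{Structural facts.} First we record facts about $\tilde\Gamma_3$ combined with semi-convexity. Because $\lambda\in\Gamma_2$, $\sigma_3+\sigma_2=f$ and $\lambda_i\geq -K$, for $\lambda_1$ large we expect:
\begin{itemize}
\item $\sigma_1\sim\lambda_1$;
\item $F^{11}=\sigma_1(\lambda|1)+\sigma_2(\lambda|1)\geq c\,\sigma_2(\lambda|1)$;
\item $\sum_i F^{ii}=(n-2)\sigma_2+(n-1)\sigma_1\geq c\lambda_1$;
\item $F^{ii}\geq c\lambda_1$ for $i\geq 2$ in the relevant regime.
\end{itemize}
Semi-convexity is what makes these available, since it rules out large negative eigenvalues, which are exactly what Pogorelov/Urbas-type singular solutions use.

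\textbf{Differentiation at $x_0$.} Differentiating the equation twice in direction $e_1$ gives
\begin{eqnarray*}
F^{ii}u_{11ii}=-F^{pq,rs}u_{pq1}u_{rs1}+f_{11}.
\end{eqnarray*}
Here $f_{11}$ contributes $\geq -C\lambda_1^2$ through the $f_{p_kp_l}u_{k1}u_{l1}$ terms and $f_{p_k}u_{k11}$ terms. The latter are absorbed by the gradient term: $\frac a2|Du|^2$ yields $aF^{ii}\lambda_i^2\geq a F^{11}\lambda_1^2$, plus first-order terms cancelled by the critical point condition. Using $(\log P)_{ii}\leq 0$ and $(\log P)_i=0$, we must control the third-order terms
\begin{eqnarray*}
-\frac{F^{pq,rs}u_{pq1}u_{rs1}}{\lambda_1}+\sum_{i\geq2}\frac{2F^{ii}u_{11i}^2}{\lambda_1(\lambda_1-\lambda_i)}-\frac{F^{ii}u_{11i}^2}{\lambda_1^2}.
\end{eqnarray*}

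\textbf{Main obstacle: a concavity inequality for $S_3=\sigma_3+\sigma_2$.} This is exactly where the paper replaces the quotient operator by the sum operator. The target inequality is
\begin{eqnarray*}
-F^{pq,rs}\xi_{pq}\xi_{rs}+\sum_{i\geq2}\frac{2F^{ii}\xi_{1i}^2}{\lambda_1-\lambda_i}\geq (1+\delta)\frac{F^{11}\xi_{11}^2}{\lambda_1}-C\sum_{i\ge2}\frac{F^{ii}\xi_{ii}^2}{\lambda_1}
\end{eqnarray*}
for $\lambda_1$ large, with a constant depending on $K$. To prove it, we first use the differentiated equation, $\sum F^{ii}\xi_{ii}=O(\lambda_1)$, to express $\xi_{11}$ in terms of $\xi_{ii}$ for $i\geq2$. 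We then estimate the quadratic form $-\sum_{i\neq j}(\sigma_1(\lambda|ij)+1)\xi_{ii}\xi_{jj}$ by Cauchy--Schwarz. We split into cases $\lambda_2\geq\delta\lambda_1$ versus $\lambda_2<\delta\lambda_1$; in the second case semi-convexity gives $\sigma_1(\lambda|1)\geq -C$, so the form is nearly diagonal and $F^{11}$ is comparable to $\sigma_2(\lambda|1)$. The extra term $\sigma_2$ in the operator helps here, because its Hessian is constant and its contribution $-\sum_{i\ne j}\xi_{ii}\xi_{jj}$ is controllable.

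\textbf{Conclusion.} With this inequality, the third-order terms are nonnegative up to terms absorbed using the critical point relation $u_{11i}/\lambda_1=-\beta\eta_i/\eta-au_iu_{ii}-bx_i$. This leaves
\begin{eqnarray*}
0\geq a F^{ii}\lambda_i^2+b\sum_i F^{ii}-C\beta^2\frac{\sum_i F^{ii}}{\eta^2}-C\lambda_1.
\end{eqnarray*}
Since $\sum_{i\geq 2}F^{ii}\lambda_i^2+F^{11}\lambda_1^2\gtrsim \lambda_1\sum_i F^{ii}$ in the regimes above, we obtain $\eta^{2}\lambda_1\leq C$ at $x_0$. Hence $P\leq C$, and in particular $\lambda_1(0)\leq C$. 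Together with $\lambda_i\geq -K$, this gives $|D^2u(0)|\leq C$.
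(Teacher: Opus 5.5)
\paragraph{The gap.} Your proof fails at the final absorption step, and the failure is structural, not technical. The conclusion $\eta^2\lambda_1\le C$ depends on the claim $\sum_{i\ge2}F^{ii}\lambda_i^2+F^{11}\lambda_1^2\ge c\,\lambda_1\sum_iF^{ii}$. That claim is false. From $\sum_i\sigma_{k-1}(\lambda|i)\lambda_i^2=\sigma_1\sigma_k-(k+1)\sigma_{k+1}$ you get
\[ \sum_iF^{ii}\lambda_i^2=\sigma_1 f-3\sigma_3-4\sigma_4 . \]
Check the two regimes.
\begin{itemize}
\item If $\lambda_2\le C_0$, all $\lambda_i$ with $i\ge2$ are bounded. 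So the right side is $O(\lambda_1)$, while $\lambda_1\sum_iF^{ii}\ge c\lambda_1^2$.
\item If $\lambda_2>C_0$, then $|\lambda_i|\le C$ for $i\ge3$ by Lemma~\ref{lem-2.1}(5). So the right side is $O(\lambda_1\lambda_2)$, while $\lambda_1\sum_iF^{ii}\ge c\lambda_1^2\lambda_2$.
\end{itemize}
Hence $\sum_iF^{ii}\lambda_i^2\le C\sum_iF^{ii}$ in every regime. The term $\frac a2|Du|^2$ buys no more than your $|x|^2$ term does. Replacing $\lambda_1$ by $\log\lambda_1$ or adding $\epsilon u$ does not help either, since $F^{ij}u_{ij}=3f-\sigma_2$ is again $O(\sum_iF^{ii})$.

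All of these lose to the cutoff contribution $\beta F^{ii}\eta_{ii}/\eta=-2\beta\sum_iF^{ii}/\eta$ as $\eta\to0$. The good third-order term cannot save you. Through the critical-point relation it is, up to lower order, a multiple of $\beta^2F^{ii}\eta_i^2/\eta^2=4\beta^2F^{ii}x_i^2/\eta^2$. When $x_0$ points along the $\lambda_1$-eigendirection and $\lambda_2\le C_0$, this is at most $4\beta^2F^{11}/\eta^2\sim\beta^2/(\lambda_1\eta^2)$. So for a fixed small $\eta(x_0)$, your inequality at $x_0$ is consistent with arbitrarily large $\lambda_1(x_0)$.

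Pogorelov-type arguments avoid this by using the cutoff $\ell-u$; here $F^{ij}(\ell-u)_{ij}=\sigma_2-3f$ is bounded below. That choice, however, needs compact sections, i.e.\ boundary or growth information you do not have. Note also that nothing in your closing mechanism is specific to $\sigma_3+\sigma_2$. A version that closed would apply equally to convex solutions of $\sigma_3(D^2u)=f$, where Pogorelov-type singular solutions exist.

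Two smaller problems:
\begin{itemize}
\item Your ``structural fact'' $F^{ii}\ge c\lambda_1$ for $i\ge2$ fails when $\lambda_2\sim\lambda_1$. Then $\lambda_2F^{22}\sim\lambda_1$, so $F^{22}$ is of order one.
\item The concavity inequality is only sketched. The paper takes it from Dong--Xu--Zhang as Lemma~\ref{lem-concave-ineq}.
\end{itemize}

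\paragraph{What the paper does instead.} The missing idea is to pay for localization only in integral form; the paper never applies a pointwise cutoff to $\lambda_1$.
\begin{itemize}
\item It proves the Jacobi inequality $F^{ii}b_{ii}\ge\varepsilon_0F^{ii}b_i^2-C(\Delta u+1)$ for $b=\log\lambda_1$ (Lemma~\ref{lem-jacobi-ineq}).
\item It passes to the Legendre--Lewy transform of $u+\frac{K_1}{2}|x|^2$ and shows that $G^{ij}/(\sigma_2+\sigma_1)$ is uniformly elliptic (Lemma~\ref{lem-uniform-elliptic}, via the case split $\lambda_2\le C_0$ versus $\lambda_2>C_0$).
\item It applies the local maximum principle to obtain $\tilde b(0)\le\int_{B_1}\tilde b\,\det(D^2u+K_1I)\,dx+C$ (Lemma~\ref{lem-value}).
\item Since $\det(D^2u+K_1I)\le C(\sigma_2+\Delta u+1)$, the remaining integrals are handled by integrating by parts against the divergence-free tensors $\sigma_2^{ij}$ and $F^{ij}$. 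This uses the integral Jacobi inequality and the comparison $|\sigma_2^{ii}|^2\le CF^{ii}(\sigma_2+\sigma_1)$ of Lemma~\ref{lem-crucial-ineq}.
\item The cutoff then only costs $\int F^{ij}\varphi_i\varphi_j\,dx\le C\int_{B_2}(\sigma_2+\Delta u)\,dx\le C$, which is finite by one more integration by parts.
\end{itemize}
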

\begin{rem}\label{rmk-quotient}
By a rescaling argument, Theorem \ref{thm-main} also holds for the equation \begin{eqnarray}\label{a-3-quo-equ}
    \sigma_{3}(D^2u)+\alpha\sigma_2(D^2u)=f(x,u,Du),
    \end{eqnarray}
where $\alpha$ is a positive constant.  
\end{rem}
\begin{rem}
For the equation $$\s_{2}(D^{2}u)+ \a \sigma_{1}(D^{2}u)=f,$$ by introducing the transformation $\tilde{v}\coloneqq u+\frac{a}{2}|x|^2$ with $a=\frac{\alpha}{n-1}$ and applying the results in \cite{fan,  qiu, SY2025Annals}, we can show that \eqref{0-est}  holds  for admissible solutions when $n=2,3,4$ and also for $n\geq 5$ under the condition 
\begin{eqnarray*}
\lambda_{min}(D^2u)+ \frac{\alpha}{n-1}\geq -c(n)\left(\Delta u+\frac{n\alpha}{n-1}\right),
\end{eqnarray*}
where $c(n)\coloneqq \frac{\sqrt{3n^2+1}-n+1}{2n}.$
\end{rem}

A key observation in this paper is that, for any admissible, semi-convex solution $u$ to Eq. \eqref{3-quo-eq}, if we consider the function 
$$v\coloneqq u-\frac{a}{2}|x|^{2},$$
for some positive constant $a$, then $v$ is an admissible, semi-convex solution to Eq. \eqref{a-3-quo-equ} with $f\equiv1$, see Eq. \eqref{eq-quotient-v} and 
Eq. \eqref{equ-v2}. It then follows from Theorem \ref{thm-main} that Theorem \ref{thm-quotient} holds.

For $k\geq 4$, we construct a singular solution to Eq. \eqref{sum-hessian}, which shows that interior $C^{2}$ estimates fail for Eq. \eqref{sum-hessian} as $k\geq 4$.

\begin{thm}\label{thm-counter example}
    Let $n\geq 4$ and $k\geq 4$. Then there exists a convex viscosity solution $u$ of 
    \begin{eqnarray*}
        \s_{k}(D^{2}u)+\s_{k-1}(D^{2}u)=f(x),\quad \text{in}\quad B_{r}\subset \RR^{n}, 
        \end{eqnarray*}
        for some constant $r>0$ and smooth function $f$ on $B_{r}$, such that $u\in C^{0,1}(B_{r})$, but $u\notin C^{1,\beta}(B_{\frac{r}{2}})$ for some $\beta>0$.
\end{thm}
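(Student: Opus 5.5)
We adapt the Pogorelov-type singular solution, as generalized by Urbas, to the sum operator $S_k=\sigma_k+\sigma_{k-1}$. Write $x=(x',x'')$ with $x'\in\RR^{m}$ and $x''\in\RR^{n-m}$, and make the ansatz
\begin{eqnarray*}
u(x)=(1+|x'|^{2})\,g(|x''|)\quad\text{or more generally}\quad u(x)=h(x')\,g(r),\qquad r=|x''|,
\end{eqnarray*}
with $g(r)=r^{2-2/\gamma}$ for a suitable exponent $\gamma$, or $g(r)=r^{\beta_0}$ for some $1\le\beta_0<2$. The exponent is chosen so that $\sigma_k(D^2u)$ is bounded, and in fact smooth and positive near $\{x''=0\}$, while $D^2u$ blows up there. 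Since $g$ is merely Lipschitz (or $C^{1,\beta_0-1}$), the function $u$ lies in $C^{0,1}$ but fails to be $C^{1,\beta}$ for $\beta>\beta_0-1$. Taking $g(r)=r$ gives a non-differentiable $u$, and then $u\notin C^{1,\beta}$ for every $\beta$. I would aim for the cleanest choice, which is $g(r)=r$ with $m=k-1$ directions in $x'$.

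\textbf{Step 1 (eigenvalue computation).} For $u=h(x')\,r$ with $h=1+|x'|^{2}$, or a cone-like variant, compute the Hessian in block form. The $x''$-block has eigenvalues $h/r$ with multiplicity $n-m-1$, and $0$ in the radial direction. There is a mixed block between $x'$ and the radial direction, and an $x'$-block equal to $r\,D^{2}h$. The only large eigenvalues are the $n-m-1$ copies of $h/r$. Expanding $\sigma_j(D^2u)$, any product containing $p$ factors $h/r$ must be paired with at least $p$ factors of size $O(r)$ from the $x'$-/radial part before it becomes bounded. Choosing $n-m-1$ small, namely $m=n-1$ so that $x''$ is one-dimensional, cannot work because then no singular eigenvalue appears. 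So we take $x''$ two-dimensional or larger and check that the terms with singular powers vanish or cancel precisely when $k$ exceeds the number of $x'$-directions plus one. This dimension count, a Pogorelov-type count in which the ``degenerate'' directions are $m$ and the cancellation requires $k\ge m+2$ or similar, is exactly where $k\ge4$ should enter. For $k\le3$ the estimate holds by Theorem \ref{thm-main}, so the construction must fail there, and this serves as a consistency check.

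\textbf{Step 2 (the lower-order term $\sigma_{k-1}$).} The new feature compared with Pogorelov is $\sigma_{k-1}$. Its degree of homogeneity differs by one, so its singular terms are one order worse. To handle it, use the freedom in $h$ and $g$, for instance $u=h(x')\,r+\phi(r)$ with a correction $\phi$, or choose the parameters so that each singular power in $\sigma_{k-1}$ multiplies a vanishing symmetric function of the bounded block. The bounded block has only $m$ nonzero eigenvalues at order $r$ in total, so $\sigma_j$ of it vanishes for $j>m+1$. I expect this balancing to be the \textbf{main obstacle}. The cancellation has to hold for both $\sigma_k$ and $\sigma_{k-1}$ simultaneously, and the sum must remain positive and smooth in $x$. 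One plausible route is to show that $\sigma_{k-1}(D^2u)$ is itself a bounded Pogorelov-type expression, using the ansatz tuned to $k-1\ge3$, and that $\sigma_k$ is bounded as well. Then $f=\sigma_k+\sigma_{k-1}$ is a smooth positive function of $x'$ alone near the singular set, after shrinking $r$ so that $h$ dominates.

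\textbf{Step 3 (viscosity and conclusion).} Away from $\{x''=0\}$, $u$ is smooth and convex with $\lambda(D^2u)$ in $\tilde\Gamma_k$, and there it solves the equation classically with the computed $f$. Convexity holds if $D^2h>0$ dominates the mixed terms, which is ensured by choosing $h$ uniformly convex and $r$ small. On the singular set, approximate $u$ by smooth convex $u_\epsilon=h(x')\sqrt{r^2+\epsilon^2}$. One shows that $S_k(D^2u_\epsilon)\to f$ locally uniformly, or uses the standard fact that convex functions whose equation holds off a codimension $\ge2$ set with a continuous right-hand side are viscosity solutions. Stability of viscosity solutions then gives the claim. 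Finally, $u\in C^{0,1}(B_r)$ is clear, and the cone singularity $|x''|$ shows $u\notin C^{1,\beta}(B_{r/2})$.
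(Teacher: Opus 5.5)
Your proposal has a genuine gap. The construction you commit to does not work, and the obstacle you flag in Step~2 is never resolved.

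\textbf{The concrete choice fails.} First, $g(r)=r$ destroys convexity. Take a direction $e$ in $x'$ with $\partial_e h\neq0$ and the radial direction of $x''$. The corresponding $2\times2$ principal block of $D^2\bigl(h(x')|x''|\bigr)$ is $\begin{pmatrix} r\,\partial_{ee}h & \partial_e h\\ \partial_e h & 0\end{pmatrix}$. For $h=1+|x'|^2$ and $x'=se_1$ this is the Hessian of $(1+s^2)t$, with determinant $-4s^2$. So there is a negative eigenvalue of size about $|\partial_e h|$ wherever $Dh\neq0$. Shrinking $r$ only shrinks $r\,D^2h$ while the mixed entry stays put, so your Step~3 claim that $D^2h$ dominates the mixed terms is false. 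A Schur-complement check shows that $h(x')g(|x''|)$ is convex only if $h\,g''\ge (g')^2g^{-1}\,Dh^{T}(D^2h)^{-1}Dh$. Within $g=r^{\alpha}$ this forces $\alpha>1$, so at best $u\in C^{1,\alpha-1}$ and fails $C^{1,\beta}$ only for $\beta>\alpha-1$; that is all the theorem claims.

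Second, with $m=k-1$ the $x''$-block has dimension $n-k+1$. This is one when $n=k$, which is your own objection. For $n>k$ the singular terms do not cancel: for $n=5$, $k=4$ one finds $\sigma_3(D^2u)=h\,(12r-4|x'|^2/r)+O(r)$, which is unbounded.

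\textbf{Your Step 1 target is impossible.} You want $\sigma_k(D^2u)$ smooth and positive near $\{x''=0\}$, but this cannot happen for a convex solution. Since $0\le\sigma_{k-1}\le f$, a lower bound $\sigma_k\ge c_0>0$ would bound $\sigma_{k-2},\dots,\sigma_1$ in turn, using Newton's inequalities $\sigma_{j-1}\sigma_{j+1}\le C_n\sigma_j^2$ and Maclaurin's lower bounds $\sigma_j\ge c\,\sigma_k^{j/k}$. That would bound $D^2u$. So $\sigma_k$ must degenerate wherever the Hessian blows up.

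\textbf{The missing idea is to make $\sigma_k$ vanish identically.} Let $u$ depend only on $x_1,\dots,x_{k-1}$. Then $D^2u$ has rank at most $k-1$, so $\sigma_k(D^2u)\equiv0$. Also $\sigma_{k-1}(D^2u)$ is the determinant of the $(k-1)\times(k-1)$ Hessian. The sum equation is then exactly the Monge--Amp\`ere equation in $\mathbb{R}^{k-1}$, and you can use Pogorelov's function $u=(1+x_1^2)\,(x_2^2+\cdots+x_{k-1}^2)^{\alpha/2}$ with $\alpha=2-\frac{2}{k-1}$, chosen so that $\frac{(k-1)\alpha}{2}-(k-2)=0$.
\begin{itemize}
\item Its determinant is a smooth positive function of $x_1$ alone.
\item It is convex near the origin precisely because $\alpha>1$, i.e. $k-1\ge3$. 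This is where $k\ge4$ enters.
\item It has $k-1$ positive and $n-k+1$ zero eigenvalues, so $\lambda(D^2u)\in\tilde\Gamma_k$.
\item It is $C^{1,\alpha-1}$ but not $C^{1,\beta}$ for $\beta>\alpha-1$.
\end{itemize}
This is the paper's proof. Your Step~3 idea of smoothing $|x''|$ to $\sqrt{|x''|^2+\epsilon^2}$, applied to $|x''|^{\alpha}$ as $(\sigma+|x''|^2)^{\alpha/2}$, is consistent with how the paper passes to the viscosity limit.
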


\begin{rem}
In view of the above results, the remaining question concerning   interior $C^{2}$ estimates for Hessian sum equation \eqref{sum-hessian} is whether such estimates hold for admissible solutions to Eq. \eqref{main-eq} in arbitrary dimensions.
\end{rem}

An important application of the interior $C^2$ estimates is the derivation of rigidity theorems for certain equations.  Consider the following $k$-Hessian equation in $n$-dimensional Euclidean spaces:
\begin{eqnarray}\label{eq-sigmak=1}
    \sigma_k(D^2u)=1,\quad \text{in} \quad\mathbb R^n.
\end{eqnarray}
For $k=1$, Eq. \eqref{eq-sigmak=1} is linear and the result follows directly from the Liouville property of harmonic functions. For $k=n$, Eq. \eqref{eq-sigmak=1} reduces to the Monge-Ampère equation, for which a well-known result in geometry asserts that that any entire strictly convex solution to this equation must be a quadratic polynomial. This was established by Jörgens \cite{J1954MA}, Calabi \cite{C1958MMJ}, and Pogorelov \cite{P1972GD,pogo}. Cheng-Yau \cite{CY1986CPAM} later provided an alternative geometric proof, and Caffarelli-Li \cite{CL2003CPAM} further extended the classical result of Jörgens, Calabi, and Pogorelov theorem based on the theorey of Monge-Amp\`ere equations \cite{c-2,c-2}. For $k=2$, Chang-Yuan \cite{CY2010DCDS} proved that if the Hessian satisfies 
$D^2u\geq [\delta-\sqrt{\frac{2n}{n-1}}] I$,
for some $\delta>0$, then any entire solution of  Eq. \eqref{eq-sigmak=1} must be a quadratic polynomial. They conjectured that any semi-convex entire solution of \eqref{eq-sigmak=1}  must be quadratic polynomials. This conjecture was later confirmed by 
 Shanker-Yuan \cite{SY2022Duke}.

For general $3\leq k\leq n-1$, Bao-Chen-Guan-Ji \cite{BCGM2003AJM} showed that any entire strictly convex  solution of Eq. \eqref{eq-sigmak=1} with sup-quadratic growth are quadratic polynomials. Later, Li-Ren-Wang \cite{LRW2016JFA} extended this result by relaxing the strictly convex condition to $(k+1)$-convex solutions. More recently, Zhang \cite{Z2025AIM} further extended this result to the class of $k$-convex and semi-convex solutions. All of the above  results rely on Pogorelov-type $C^2$ interior estimate and, in an essential way, require sup-quadratic growth condition. Without such a condition, to the best of our knowledge, there are no corresponding results for general $3\leq k\leq n-1$. Moreover,  Warren \cite{W2016CPDE} constructed examples showing that \eqref{eq-sigmak=1} admits non-polynomial entire $k$-convex solutions to Eq. \eqref{eq-sigmak=1} when $n\geq 2k-1$.

For the sum Hessian equation 
\begin{eqnarray*}
\sigma_k(D^2u)+\alpha\sigma_{k-1}(D^2u)=1,\quad \text{in} \quad \mathbb R^n.
\end{eqnarray*}
 Liu-Ren \cite{LR2023JFA} first obtained the result for $k$-convex solutions with sup-quadratic growth. Later, Liang--Yan--Zhu \cite{LYZ2024Arxiv} extended the result to $(k-1)$-convex and semi-convex solutions. Recently, Dong--Xu--Zhang \cite{DXZ2026arxiv} obtained the result under a dynamic semi-convex condition.  The sup-quadratic growth condition is essential in all these works.

In the final part of this paper, we establish two rigidity results for the equation
\begin{eqnarray}\label{eq-rigidity}
\sigma_{3}(D^2u)+\sigma_2(D^2u)=c_0,\quad \text{in } \mathbb R^n,  
\end{eqnarray}
where $c_0>0$ is a positive constant.

\begin{thm}\label{thm-rigidity}
Let $n\geq3$, and let $u\in C^4(\mathbb R^n)$ be an admissible solution of Eq. \eqref{eq-rigidity}. Assume that one of the following conditions holds: 
\begin{enumerate}
\item $u$ is semi-convex and has sub-quadratic growth, 
\item $u$ satisfies $D^2u\geq \left(-\frac{1}{n-2}+\delta\right)I$ for some positive constant $\delta>0$, and $c_0> -\frac{n(n-1)(n-2)}{6}\left(\frac{1}{n-2}-\delta\right)^3+\frac{n(n-1)}{2}\left(\frac{1}{n-2}-\delta\right)^2$,
\item $c_0\leq\frac{2n(n-1)}{3(n-2)^2}$ and $u$ satisfies $D^2u\geq\left(-K_0+\delta\right)I$ for the unique $K_0\in(0,\frac{2}{n-2}]$ such that $-\frac{n(n-1)(n-2)}{6}K_0^3+\frac{n(n-1)}{2}K_0^2=c_0$ and some $\delta>0$,
\end{enumerate}
then $u$ must be a quadratic polynomial.
\end{thm}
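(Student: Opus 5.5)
The plan is to combine the interior estimate of Theorem \ref{thm-main} with a rescaling argument, and then to obtain smoothness of the limiting Hessian from a Liouville-type result for concave uniformly elliptic equations.

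\textbf{Case (1).} Fix $R$ large and set
\begin{eqnarray*}
u_R(y)\coloneqq R^{-2}u(Ry), \quad y\in B_5.
\end{eqnarray*}
Then $D^2u_R(y)=D^2u(Ry)$, so $u_R$ is admissible, semi-convex with the same constant $K$, and solves \eqref{eq-rigidity}. Sub-quadratic growth gives $|u_R|\le C$ on $B_5$, uniformly in $R$. Semi-convexity combined with this oscillation bound yields an interior gradient bound: for a semi-convex function, $|Du_R|$ on $B_4$ is controlled by $\operatorname{osc}_{B_5}u_R$ and $K$. After shrinking radii, this gives a $C^1$ bound uniform in $R$.

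Theorem \ref{thm-main}, applied with $f\equiv c_0$ and translated to any center in $B_1$, then gives $|D^2u_R|\le C$ on $B_1$. Hence $|D^2u|\le C$ on $B_R$ for every $R$, so $D^2u$ is globally bounded.

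With bounded Hessian, the equation is uniformly elliptic on the relevant compact set of admissible matrices. I would pass to an equivalent concave operator, e.g. $(S_3)^{1/3}$ or the quotient form $\sigma_3/\sigma_2+1$ (Li--Ren--Wang show $\tilde\Gamma_3$ admissibility gives concavity of a suitable form). Evans--Krylov and scaling then give
\begin{eqnarray*}
[D^2u]_{C^\alpha(B_R)}\le CR^{-\alpha}\|D^2u\|_{L^\infty}.
\end{eqnarray*}
Letting $R\to\infty$ shows $D^2u$ is constant, so $u$ is quadratic.

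\textbf{Cases (2) and (3).} There is no growth assumption, so I would proceed as in Chang--Yuan: show the lower bound forces convexity-type structure and a Legendre/Lewy rotation. Writing $D^2u\ge(-K+\delta)I$ with $K=\frac1{n-2}$ or $K_0$, the condition on $c_0$ says $c_0$ exceeds $S_3$ evaluated at $-KI$, namely
\begin{eqnarray*}
-\tbinom n3K^3+\tbinom n2K^2 .
\end{eqnarray*}
This should imply that $w\coloneqq u+\frac{K}{2}|x|^2$ is strictly convex, with $D^2w\ge\delta I$, and that it solves a transformed equation. I would then perform the Legendre transform $w^*$. Its Hessian $(D^2w)^{-1}$ lies in $(0,\delta^{-1}]$, is bounded, and satisfies a fully nonlinear equation $G(D^2w^*)=c_0$ on all of $\RR^n$, since $Dw$ is onto by strict convexity.

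The key step is to verify that $G$ is uniformly elliptic and concave (or admits a concave reformulation) on the range of $D^2w^*$. This is where the thresholds $\frac1{n-2}$ and the cubic condition on $c_0$ enter: they ensure the eigenvalues stay where $S_3$ is elliptic, keeping $\lambda$ away from $\partial\tilde\Gamma_3$. Once this holds, the same Evans--Krylov Liouville argument shows $D^2w^*$ is constant, hence $D^2u$ is constant.

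\textbf{Main obstacle.} In case (1), the only nontrivial input is Theorem \ref{thm-main}, and the remaining steps are routine. The main obstacle is cases (2) and (3): checking ellipticity and concavity of the Legendre-transformed operator near the boundary values, and showing that $K_0$ is exactly the threshold. I would first compute with diagonal matrices and with $\lambda_i\to -K$, where the equation forces the remaining eigenvalues to stay bounded.
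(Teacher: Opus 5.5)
Your Case (1) is essentially the paper's argument: rescale, use Theorem \ref{thm-main} for a uniform Hessian bound, then apply Evans--Krylov and let $R\to\infty$. Your gradient bound from semi-convexity and your use of the concave form $S_3^{1/3}$ fill in details the paper leaves implicit.

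Cases (2) and (3) have a genuine gap. You have the right skeleton: Legendre transform of $u+\frac K2|x|^2$, bounded Hessian of the transform, Evans--Krylov Liouville. But you leave open the step that is the actual proof, and you misdiagnose it.

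\textbf{Ellipticity is not the issue.} Admissibility already puts $\lambda$ in $\tilde\Gamma_3$, and the transformed coefficients $F^{ii}w_{ii}^{-2}$ are positive. Uniform ellipticity after normalization comes from the argument of Lemma \ref{lem-uniform-elliptic}. The hypotheses on $\delta$, $c_0$, $K_0$ have nothing to do with keeping $\lambda$ away from $\partial\tilde\Gamma_3$.

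\textbf{What is needed is a convex level set} for the transformed equation. This is not inherited from concavity of $S_3^{1/3}$: the map $M\mapsto M^{-1}$ is convex, and a concave increasing function composed with it has no definite concavity. The paper gets convexity from an explicit expansion. Let $\mu$ be the eigenvalues of the transformed Hessian and $h(K)\coloneqq S_3(-K,\ldots,-K)=-\frac{n(n-1)(n-2)}{6}K^3+\frac{n(n-1)}{2}K^2$. Then
\[
\sigma_n(\mu)\Bigl(S_3\bigl(\mu_1^{-1}-K,\ldots,\mu_n^{-1}-K\bigr)-c_0\Bigr)=\sigma_{n-3}(\mu)+A_{n-2}\sigma_{n-2}(\mu)+A_{n-1}\sigma_{n-1}(\mu)+A_n\sigma_n(\mu),
\]
with $A_{n-2}=1-(n-2)K$, $A_{n-1}=(n-1)K\bigl(\frac{(n-2)K}{2}-1\bigr)$ and $A_n=h(K)-c_0$. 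Call the right-hand side $g(\mu)$. By \cite[Proposition 2.2]{GZ2021PAMQ}, which the paper invokes:
\begin{itemize}
\item if $A_{n-2}\ge 0$ and $A_n<0$, then $-g/\sigma_{n-1}$ is elliptic and concave;
\item if $A_n=0$ and $A_{n-1}\le0$, then $-g/\sigma_{n-2}$ is elliptic and concave.
\end{itemize}
Either way the level set $\{g=0\}$ is convex and Evans--Krylov applies to the transform. These sign conditions are exactly what the hypotheses encode.

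\textbf{Your choice of $K$ in Case (2) is wrong.} You cannot take $K=\frac1{n-2}$. Since $h$ is increasing on $(0,\frac{2}{n-2})$, the hypothesis $c_0>h(\frac1{n-2}-\delta)$ does not give $A_n=h(\frac1{n-2})-c_0=\frac{n(n-1)}{3(n-2)^2}-c_0<0$. The hypothesis compares $c_0$ with $S_3$ at the lower-bound matrix, not at $-\frac1{n-2}I$. The paper chooses $K\in(\frac1{n-2}-\delta,\frac1{n-2}]$ close enough to the left endpoint that $h(K)<c_0$. Then $A_{n-2}\ge0$, $A_n<0$, and $D^2u+KI\ge(K-\frac1{n-2}+\delta)I>0$, so the Legendre transform is still entire with bounded Hessian.

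\textbf{Case (3) is also misread.} Here $c_0$ equals, rather than exceeds, $S_3$ at $-K_0I$, so $A_n=0$. The bound $c_0\le\frac{2n(n-1)}{3(n-2)^2}=h(\frac2{n-2})$ is what guarantees $K_0\le\frac2{n-2}$, i.e.\ $A_{n-1}\le 0$.

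Without this computation and the concavity it yields, Evans--Krylov cannot be applied to the transformed equation, and the Liouville conclusion in Cases (2) and (3) does not follow.
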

As a direct consequence of Theorem \ref{thm-rigidity} (2), we obtain the following rigidity result for convex solutions to Eq. \eqref{eq-rigidity} without the quadratic growth assumption.
\begin{cor}
    Let $n\geq3$, and let $u\in C^4(\mathbb R^n)$ be  a convex solution of Eq. \eqref{eq-rigidity}. Then $u$ must be  a quadratic polynomial. 
\end{cor}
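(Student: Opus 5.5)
The plan is to check that a convex solution meets the hypotheses of Theorem \ref{thm-rigidity} (2) for some $\delta>0$, and then invoke that theorem.

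\textbf{Step 1: the Hessian lower bound.} Convexity gives $D^2u\geq 0$. Since $\frac{1}{n-2}>0$, we may choose any $\delta\in\left(0,\frac{1}{n-2}\right)$, and then
\[
D^2u\geq 0\geq\left(-\tfrac{1}{n-2}+\delta\right)I .
\]
Admissibility is automatic. We have $\lambda(D^2u)\in\overline{\Gamma}_n$, and $c_0>0$. Because every $\sigma_j(\lambda)\geq0$ and $\sigma_3+\sigma_2=c_0>0$, we get $\sigma_2>0$ or $\sigma_3>0$; in either case $\sigma_2>0$, since $\sigma_3>0$ forces $\sigma_2>0$ for nonnegative $\lambda$. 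Then $\sigma_1>0$ as well, so $\lambda\in\Gamma_2$ with $\sigma_3+\sigma_2>0$, which means $\lambda\in\tilde\Gamma_3$.

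\textbf{Step 2: the constant condition.} Set $t=\frac{1}{n-2}-\delta$ and
\[
g(t)=-\tfrac{n(n-1)(n-2)}{6}t^3+\tfrac{n(n-1)}{2}t^2 .
\]
We need $c_0>g(t)$ for some admissible $\delta$, that is, for some $t\in\left(0,\frac{1}{n-2}\right)$. The function $g$ is continuous with $g(0)=0$, and $c_0>0$. So taking $\delta$ close to $\frac{1}{n-2}$, i.e.\ $t$ small, gives $g(t)<c_0$, and Step 1 still holds for this $\delta$.

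\textbf{Step 3: conclusion and the only delicate point.} Theorem \ref{thm-rigidity} (2) now applies and shows that $u$ is a quadratic polynomial. The one place requiring care is choosing $\delta$ so that both conditions hold at once. This is compatible: any $\delta<\frac{1}{n-2}$ gives the Hessian bound, and $\delta$ near $\frac{1}{n-2}$ gives the constant condition. No growth assumption is needed, because case (2) of the theorem has none.
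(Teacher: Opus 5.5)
Your proposal is correct and follows the paper's own route: the paper states this corollary as a direct consequence of Theorem \ref{thm-rigidity}~(2), and you check its hypotheses properly, namely $\lambda(D^2u)\in\tilde\Gamma_3$ from convexity and $c_0>0$, the Hessian lower bound, and the constant condition via $g(t)\to 0<c_0$. As a small simplification, $\delta=\frac{1}{n-2}$ is also allowed, and with that choice the constant condition reduces to $c_0>0$.
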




We conclude the introduction by outlining the main ideas of the proof of Theorem \ref{thm-main}. We follow the strategy presented in \cite{SY2020CVPDE} and \cite{L2025APDE}, we divide the proof into the following three steps. The first step is to establish a Jacobi-type inequality for $b=\log\lambda_{\max}(D^{2}u)$. The second step is to bound the value of $b(0)$ in terms of its integral by employing the Legendre transform together with a mean-value-type inequality. The third step is to handle the integral terms via integration by parts.

The strategy works well for the $\sigma_2$ equation, but encounters difficulties at each step for the $S_3$ equation. In the first step, due to the lack of homogeneity of $S_3$, we cannot derive the concavity inequality by the method in \cite{Z2025AIM}.  However,  a  key observation of Dong-Xu-Zhang \cite{DXZ2026arxiv} shows that the desired concavity inequality can be obtained from a higher-dimensional concavity inequality for the $\sigma_3$ operator. The main difficulty in the second step lies in ensuring the uniform ellipticity of a elliptic operator $G^{ij}$ (see Section~\ref{sec-4}) through a suitable normalization.  For the $\sigma_2$ equation, this is a straightforward consequence  since $\lambda_2$ is bounded. In contrast, for $S_3$ equation,  we only have a bound on $\lambda_3$, which makes it difficult to estimate the term $G^{22}$. A substantially more delicate argument is therefore required to establish  Lemma \ref{lem-uniform-elliptic}. 
In the third step, unlike the $\sigma_2$ case, we cannot control $\sigma_2(D^2u)$ solely by $\Delta u$, consequently, the $\sigma_2$ term  appearing  in the integral must be handled with caution. Moreover, since $u$ only belongs to $\tilde\Gamma_3$ (rather than $\Gamma_3$), the quantities $\sigma_2^{ii}$ and $F^{ii}$ are not comparable. Motivated by the idea in \cite{L2025APDE}, we  overcome this difficulty by establishing the key Lemma \ref{lem-crucial-ineq}, which completes the proof.

\

{\textbf{Organization of the rest of the paper.}}
In Section~\ref{sec-2}, we review the basic properties of $\sigma_k$ and $S_k$ and collect some known facts. In Section~\ref{sec-3}, we establish a Jacobi inequality for $b=\log\lambda_{\max}$. In Section~\ref{sec-4}, we perform a Legendre transform and prove the uniform ellipticity of $G^{ij}$ under a suitable normalization.  In Section~\ref{sec-5}, we prove a key lemma showing the terms $\sigma_2^{ii}$ and $F^{ii}$ are comparable. In Section~\ref{sec-6}, we complete the proof of Theorem~\ref{thm-main} and Theorem~\ref{thm-quotient}. Section~\ref{sec-7} is devoted to constructing a singular solution for $S_k$ equations with $k\geq 4$. The final section establishes some rigidity results.

\section{Preliminaries}\label{sec-2}

In this section, we first recall the definition and some basic properties of the elementary symmetric polynomial functions.  
\begin{defn}
For $k = 1, 2,\ldots, n,$ the $k$-th elementary symmetric function $\sigma_k$ is defined by
\begin{eqnarray*} 
\sigma_k(\lambda) = \sum _{1 \le i_1 < i_2 <\cdots<i_k\leq n}\lambda_{i_1}\lambda_{i_2}\cdots\lambda_{i_k},
 \qquad \text {for} \quad\lambda\coloneqq (\lambda_1,\ldots,\lambda_n)\in \mathbb{R}^{n}.
\end{eqnarray*}
\end{defn}
We adopt the convention that $\sigma_0=1$ and $\sigma_k =0$ for $k>n$.  Denote  $\sigma_{k;i}(\lambda)\coloneqq \sigma _k (\lambda \left| i \right.)$ the symmetric
	function with $\lambda_i = 0$ and $\sigma_{k;ij}(\lambda)\coloneqq \sigma _k (\lambda \left| ij \right.)$ the symmetric function with $\lambda_i =\lambda_j = 0$.  
\begin{defn}
    Let $A=\{A_{ij}\}$ be an $n\times n$ symmetric matrix. For $k=1,2,\cdots, n$, we define
    \begin{eqnarray*}
        \sigma_{k}(A)\coloneqq \sigma_{k}(\lambda(A))=\sum\limits_{1\leq i_{1}<i_{2}\cdots< i_{k}\leq n}\lambda_{i_{1}}(A)\lambda_{i_{2}}(A)\cdots \lambda_{i_{k}}(A),
    \end{eqnarray*}
where $\lambda(A)=(\lambda_1(A),\lambda_2(A),\ldots,\lambda_n(A))$ denotes the vector of eigenvalues of $A$. 
Equivalently, $\sigma_k(A)$ is the sum of all $k\times k$ principal minors of $A$.
\end{defn}
We also denote by $\sigma _m (A \left|
i \right.)$ the symmetric function with $A$ deleting the $i$-row and
$i$-column and $\sigma _m (A \left| ij \right.)$ the symmetric
function with $A$ deleting the $i,j$-rows and $i,j$-columns. Define 
\begin{eqnarray*}
    S_{k}(\l)\coloneqq \sigma_{k}(\lambda)+\sigma_{k-1}(\l), \quad \text{for}~k\geq 1,
\end{eqnarray*}
and 
\begin{eqnarray*}
    F(D^{2}u)\coloneqq S_{k}(\lambda(D^{2} u)).
\end{eqnarray*}

Next, we recall some well-known properties of elementary functions.
\begin{prop}
Let $\lambda=(\lambda_1,\cdots,\lambda_n)\in \mathbb R^n$. For any $1\leq p,q \leq n$, we denote
\begin{enumerate}
\item[$(4)$] $S_k^{pp}(\lambda):=\frac{\partial S_k(\lambda)}{\partial \lambda_p}=\sigma_{k-1;p}(\lambda)+\sigma_{k-2;p}(\lambda)$,
\item[$(5)$] $S_k^{pp,qq}(\lambda):=\frac{\partial S_k(\lambda)}{\partial \lambda_p \partial \lambda_q}=\sigma_{k-2;pq}(\lambda)+\sigma_{k-3;pq}(\lambda)$, \quad \text{and} \quad $S_k^{pp,pp}(\lambda)=0$,
\item[$(6)$] $S_k(\lambda)=\lambda_p S_{k-1;p}(\lambda)+S_{k;p}(\lambda)$,
\item[$(7)$] $\sum\limits_{p=1}^n S_{k;p}(\lambda)=(n-k)\sigma_{k}(\lambda)+(n-k+1)\sigma_{k-1}(\lambda)$, 
\item[$(8)$] $\sum\limits_{p=1}^{n}\lambda_p S_{k-1;p}(\lambda)=k\sigma_k(\lambda)+(k-1)\sigma_{k-1}(\lambda)$.
\end{enumerate}
\end{prop}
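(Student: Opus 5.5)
These identities are purely algebraic consequences of the linearity of $S_k=\sigma_k+\sigma_{k-1}$ in its two pieces, so the plan is to reduce each item to the corresponding classical identity for a single $\sigma_m$ and then add. The identities I will use are the following. The expansion $\sigma_m(\lambda)=\lambda_p\sigma_{m-1;p}(\lambda)+\sigma_{m;p}(\lambda)$ holds because each monomial of $\sigma_m$ either contains $\lambda_p$ or does not. Differentiating it gives $\partial\sigma_m/\partial\lambda_p=\sigma_{m-1;p}$, since $\sigma_{m-1;p}$ and $\sigma_{m;p}$ do not depend on $\lambda_p$. Applying the same reasoning a second time gives $\partial^2\sigma_m/\partial\lambda_p\partial\lambda_q=\sigma_{m-2;pq}$ for $p\neq q$, while $\partial^2\sigma_m/\partial\lambda_p^2=0$ because $\sigma_m$ is affine in each variable. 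The two summation formulas are $\sum_p\sigma_{m;p}=(n-m)\sigma_m$ and $\sum_p\lambda_p\sigma_{m-1;p}=m\sigma_m$. For the first, each $m$-element index set $\{i_1,\dots,i_m\}$ is omitted by exactly $n-m$ choices of $p$. The second is Euler's identity for the $m$-homogeneous polynomial $\sigma_m$, or equivalently the statement that each monomial is counted once for each of its $m$ indices.

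With these in hand, items $(4)$ and $(5)$ follow by applying the derivative formulas with $m=k$ and $m=k-1$ and adding; this gives $S_k^{pp}=\sigma_{k-1;p}+\sigma_{k-2;p}$ and $S_k^{pp,qq}=\sigma_{k-2;pq}+\sigma_{k-3;pq}$ for $p\ne q$, together with $S_k^{pp,pp}=0$. Item $(6)$ comes from adding the expansion for $m=k$ and for $m=k-1$. This yields $\lambda_p(\sigma_{k-1;p}+\sigma_{k-2;p})+(\sigma_{k;p}+\sigma_{k-1;p})$, which is exactly $\lambda_pS_{k-1;p}+S_{k;p}$ under the notation $S_{j;p}=\sigma_{j;p}+\sigma_{j-1;p}$. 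For item $(7)$ I sum $S_{k;p}=\sigma_{k;p}+\sigma_{k-1;p}$ over $p$ and use the counting identity with $m=k$ and $m=k-1$, which gives $(n-k)\sigma_k+(n-k+1)\sigma_{k-1}$. Item $(8)$ follows in the same way from Euler's identity with $m=k$ and $m=k-1$, giving $k\sigma_k+(k-1)\sigma_{k-1}$.

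None of these steps is a genuine obstacle. The only point needing care is the bookkeeping of the conventions $\sigma_0=1$, $\sigma_m=0$ for $m<0$ or $m>n$, and $\sigma_{m;p}=0$ when $m>n-1$. These conventions are what make the formulas valid in the boundary cases, for instance $k=1$ or $k=n$; there the counting identity must still read $(n-n)\sigma_n=0$, which matches $\sigma_{n;p}=0$. I would also note that the notation $S_k^{pp,qq}$ is intended only for $p\neq q$ in the first formula of $(5)$.
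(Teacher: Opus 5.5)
Your proof is correct. The paper lists these identities as well-known without proof, and writing $S_k=\sigma_k+\sigma_{k-1}$ and adding the classical facts $\sigma_m=\lambda_p\sigma_{m-1;p}+\sigma_{m;p}$, $\partial_{\lambda_p}\sigma_m=\sigma_{m-1;p}$, $\sum_p\sigma_{m;p}=(n-m)\sigma_m$ and $\sum_p\lambda_p\sigma_{m-1;p}=m\sigma_m$ for $m=k,k-1$ is the standard justification it relies on. Your notational clarifications are sound: $S_{j;p}=\sigma_{j;p}+\sigma_{j-1;p}$, the extra convention $\sigma_m=0$ for $m<0$ (the paper only states $\sigma_0=1$ and $\sigma_m=0$ for $m>n$), and the restriction $p\neq q$ in the first formula of $(5)$.
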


\begin{lem}\label{lem-2.1}
Assume that $\lambda=(\lambda_1,\cdots,\lambda_n)\in\tilde\Gamma_k$, $1\leq k\leq n$, and $\lambda_1\geq\cdots\geq\lambda_n$. Then
\begin{enumerate}
    \item  The set $\tilde{\Gamma}_{k}$ is a convex set. Moreover, the operator $S_k$ is elliptic in $\widetilde{\Gamma}_{k}$.  

\item The functions $S_{k}^{\frac{1}{k}}(\lambda)$ and $\log S_{k}(\lambda)$ are concave in $\tilde\Gamma_k$.


\item  For any $1\leq j\leq k-1$, there exists a positive constant $c_1$, depending on $n,k$, such that
\begin{eqnarray*}
F^{jj}(\lambda)\geq \frac{c_1 F(\lambda)}{\lambda_j}.
\end{eqnarray*}

\item For any $j\geq k$, there exists a positive constant $c_2$, depending on $n,k$, such that
\begin{eqnarray*}
F^{jj}(\lambda)\geq c_2 \sum_{i}F^{ii}(\lambda).
\end{eqnarray*}
\item If $S_k(\lambda)\leq N_1$ and $\lambda_i\geq-K$ for all $1\leq i\leq n$, then there exists a positive constant $C_3$, depending on $N_1,n,k,K$, such that $|\lambda_i|\leq C_3$ for all $i\geq k$.
\end{enumerate}
\end{lem}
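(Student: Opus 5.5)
Lemma \ref{lem-2.1} has five parts, and I will treat them separately, citing known results where they are standard.

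\textbf{Parts (1) and (2).} These are the results of Li--Ren--Wang \cite{LRW2019CVPDE}. The mechanism I would follow is to view $S_k(\lambda)=\sigma_k(\lambda)+\sigma_{k-1}(\lambda)$ as $\sigma_k(\lambda,1)$, i.e.\ $\sigma_k$ evaluated at the vector $(\lambda,1)\in\mathbb R^{n+1}$. A direct check shows that $\lambda\in\tilde\Gamma_k$ if and only if $(\lambda,1)\in\Gamma_k^{(n+1)}$. In one direction, $\sigma_j(\lambda,1)=\sigma_j(\lambda)+\sigma_{j-1}(\lambda)$, and for $j\le k-1$ both terms are positive. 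In the other, membership of $(\lambda,1)$ in $\Gamma_k$ forces $\lambda\in\Gamma_{k-1}$ by the standard fact $\sigma_{j;i}>0$ for $\mu\in\Gamma_k$ and $j\le k-1$.

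Granting this, convexity of $\tilde\Gamma_k$ is the convexity of the slice $\Gamma_k\cap\{\mu_{n+1}=1\}$. Ellipticity reads $\partial S_k/\partial\lambda_p=\sigma_{k-1}((\lambda,1)|p)>0$. Concavity of $S_k^{1/k}$ and $\log S_k$ is inherited from concavity of $\sigma_k^{1/k}$ on $\Gamma_k$, restricted to an affine slice.

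\textbf{Parts (3) and (4).} I again use the embedding. Set $\mu=(\lambda,1)$ and note $F^{jj}(\lambda)=\sigma_{k-1;j}(\mu)$. Order $\mu$ decreasingly; inserting the entry $1$ shifts each index by at most one.

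For (4), I would use the known inequality $\sigma_{k-1;j}(\mu)\ge c\,\sigma_{k-1}(\mu)$ for $j\ge k$ in $\Gamma_k$, which holds because $\sigma_{k-1;j}\ge\sigma_{k-1;i}$ whenever $\mu_j\le\mu_i$. Combined with $\sum_i\sigma_{k-1;i}(\mu)=(n+1-k+1)\sigma_{k-1}(\mu)$, this gives the claim. The index shift caused by the entry $1$ is the one point needing care. If the position of $1$ is $\le j$, then $\lambda_j$ sits at position $j+1\ge k$ in $\mu$. If it is $>j$, then $\lambda_j\le$ the entries ranked below it, up to position $k$, and since $\sigma_{k-1;j}$ is monotone decreasing in $\mu_j$, the comparison with index $k$ still works. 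I should also drop the extra summand $\sigma_{k-1;n+1}$ from the total sum, but it is itself $\le C\sigma_{k-1;j}$ by the same monotonicity, so nothing is lost.

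For (3), I use the identity $\mu_jF^{jj}=\mu_j\sigma_{k-1;j}(\mu)\ge c\,\sigma_k(\mu)$ for $j\le k-1$ (the largest entries) in $\Gamma_k$, a standard estimate. Its proof is $\sigma_k=\mu_j\sigma_{k-1;j}+\sigma_{k;j}$ together with $\sigma_{k;j}\le C\mu_j\sigma_{k-1;j}$ when $\mu_j$ is among the top $k-1$. If $\lambda_j$ drops to rank $k$ in $\mu$ because of the entry $1$, then $\lambda_j<1$. In that case both $\lambda_1,\dots,\lambda_j$ and $1$ are among the top $k$ entries, and the estimate $\mu_k\sigma_{k-1;k}\ge c\sigma_k$ still holds, since a $(k+1)$-st entry exists when $n+1>k$; the case $k=n$ is handled directly. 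Dividing by $\lambda_j$ gives $F^{jj}\ge c_1F/\lambda_j$. I expect this index bookkeeping around the inserted $1$ to be the main nuisance.

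\textbf{Part (5).} Write $\lambda_i\ge-K$ and $S_k\le N_1$, with $\mu=(\lambda,1)\in\Gamma_k$. For $i\ge k$, the lower bound is given, and an upper bound on $\lambda_k$ suffices. In $\Gamma_k$, with $\mu_1\ge\dots\ge\mu_k$ among the top, we have $\sigma_k(\mu)\ge c\,\mu_1\cdots\mu_k$ up to terms controlled by the negative parts; this uses the standard expansion and $\mu_i\ge-K$. If $\lambda_k\to\infty$, then $\mu_1,\dots,\mu_k\ge\lambda_k$ are all large. Expanding $\sigma_k(\mu)$, the positive product $\lambda_1\cdots\lambda_k$ dominates every term containing a negative entry, because each such term has at most $k-1$ large factors and its other factors are bounded below by $-K$. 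Hence $S_k\ge c\lambda_k^k-C\lambda_1\cdots\lambda_{k-1}(1+K)^{\,\cdot}$. The right way to organize this is by induction via $\sigma_k(\mu)=\mu_k\sigma_{k-1;k}+\sigma_{k;k}$, which gives $S_k\ge c\lambda_k^{k}\to\infty$, a contradiction. Therefore $\lambda_k\le C_3$, so $|\lambda_i|\le C_3$ for all $i\ge k$.
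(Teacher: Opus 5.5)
Your proposal is correct, and it takes a different route from the paper. The paper's proof of Lemma~\ref{lem-2.1} only cites the literature: (1)--(2) come from \cite{LRW2019CVPDE}, (3) from \cite{LR2023JFA}, (4) from \cite{R2024IMRN} and (5) from \cite{LYZ2024Arxiv}. You instead derive everything from one identity, $S_k(\lambda)=\sigma_k(\lambda,1)$, together with the equivalence $\lambda\in\tilde\Gamma_k\iff(\lambda,1)\in\Gamma_k\subset\mathbb{R}^{n+1}$; you verify both correctly. This is the same lifting the paper borrows from \cite{DXZ2026arxiv} for Lemma~\ref{lem-concave-ineq}, applied one step earlier. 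Its effects are:
\begin{itemize}
\item (1)--(2) become immediate: $\tilde\Gamma_k$ is an affine slice of a convex cone, and $S_k^{1/k}$ is the restriction of the concave function $\sigma_k^{1/k}$.
\item (3)--(4) reduce to classical $\Gamma_k$-estimates in $\mathbb{R}^{n+1}$. The only bookkeeping is that, in $\mu=(\lambda,1)$, $\lambda_j$ sits at position $j$ or $j+1\le k$ when $j\le k-1$, and at a position $\ge k$ when $j\ge k$.
\item The argument adapts at once to $\sigma_k+\alpha\sigma_{k-1}$ via $(\lambda,\alpha)$.
\end{itemize}
The citation route gives brevity. Yours gives a uniform, self-contained argument with transparent constants, but it rests on a few classical inequalities on $\Gamma_k$ that you should state and cite precisely.

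Two places need tidying.

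In (4), the monotonicity $\sigma_{k-1;j}\ge\sigma_{k-1;i}$ for $\mu_j\le\mu_i$ only reduces the claim to position $k$. It does not give $\sigma_{k-1;k}(\mu)\ge c\,\sigma_{k-1}(\mu)$: in $\sum_{i=1}^{n+1}\sigma_{k-1;i}(\mu)=(n-k+2)\sigma_{k-1}(\mu)$, the summands with $i>k$ are the larger ones. That lower bound is a separate classical fact on $\Gamma_k$ (e.g.\ Lin--Trudinger) and must be quoted as such. Conversely, you need no control of the extra summand $\sigma_{k-1;n+1}(\mu)=\sigma_{k-1}(\lambda)>0$, since discarding it only helps.

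In (5), the bound you display, $c\lambda_k^k-C\lambda_1\cdots\lambda_{k-1}$, gives no contradiction, because $\lambda_1\cdots\lambda_{k-1}$ may dwarf $\lambda_k^k$. Your own term-by-term expansion gives more when $\lambda_k\ge1$. Each negative term has at most $k-1$ nonnegative factors, whose product is at most $\lambda_1\cdots\lambda_{k-1}$, and its negative factors lie in $[-K,0)$. Hence $\sigma_k(\lambda,1)\ge\lambda_1\cdots\lambda_{k-1}\bigl(\lambda_k-C(n,k,K)\bigr)$, which bounds $\lambda_k$ directly, with no induction and no use of the cone.
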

\begin{proof}
    The detailed proof can be found in \cite[Theorem 10, Corollary 13]{LRW2019CVPDE} for (1), (2), and \cite[Lemma 2.4 (b)]{LR2023JFA} for (3), and \cite[Proposition 1.6 (ii)]{R2024IMRN} for (4), and \cite[Lemma2.5 (b)]{LYZ2024Arxiv} for (5). 
\end{proof}

Denote
\begin{eqnarray*}
    F^{ij}(D^{2}u)\coloneqq \frac{\partial F(D^{2}u)}{\partial u_{ij}} ,\quad F^{ij,pq}=\frac{\partial F(D^{2}u)}{\partial u_{ij}\partial u_{pq}}.
\end{eqnarray*}

Finally, we recall an important concavity inequality related to the sum Hessian operator $F(D^{2}u)$.
\begin{lem}\label{lem-concave-ineq}
Suppose $\lambda_1 \ge \lambda_2 \ge \cdots \ge \lambda_n \ge -K$. Then for sufficiently large $\lambda_1$ depending only on $n$, the following inequality holds:
\begin{eqnarray}\label{concavity}
    -\sum_{p \neq q} \frac{F^{pp,qq}\xi_p\xi_q}{F} + 2 \frac{\left( {\sum_{i=1}^n F^{ii}\xi_i} \right)^2}{F^2} + 2 \sum_{i\geq 2} \frac{F^{ii}\xi_i^2}{(\lambda_1+K+1) F} \ge (1+\gamma) \frac{F^{11}\xi_1^2}{\lambda_1 F},
\end{eqnarray}
for any vector $\xi = (\xi_1, \ldots, \xi_n) \in \mathbb{R}^n$ and some constant $\gamma>0$.
\end{lem}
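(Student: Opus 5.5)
The plan is to reduce the inequality to a known concavity inequality for $\sigma_k$, following the observation of Dong--Xu--Zhang \cite{DXZ2026arxiv} described in the introduction. Given $\lambda\in\RR^n$, consider the augmented vector $\hat\lambda=(\lambda_1,\ldots,\lambda_n,1)\in\RR^{n+1}$. Then $\sigma_k(\hat\lambda)=\sigma_k(\lambda)+\sigma_{k-1}(\lambda)=S_k(\lambda)=F$. Differentiating in the first $n$ variables, the derivatives of $\sigma_k(\hat\lambda)$ coincide with $F^{pp}$ and $F^{pp,qq}$ for $p,q\le n$. Now set $\hat\xi=(\xi,0)$. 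The left side of the inequality \eqref{concavity} is then literally the corresponding expression for $\sigma_k$ in dimension $n+1$, evaluated at $\hat\lambda$ and $\hat\xi$, with the sum over $i\ge2$ restricted to the first $n$ indices. Because the extra term in that sum is nonnegative and $\hat\xi_{n+1}=0$, nothing is lost by the restriction.

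We next check the hypotheses of the $\sigma_k$ result. The ordering $\hat\lambda_1=\lambda_1\ge\cdots$ holds once $\lambda_1\ge1$, since the extra entry $1$ simply slots into the order somewhere. We also have $\hat\lambda_i\ge-K$. Admissibility carries over: for $\lambda\in\tilde\Gamma_k$ one has $\sigma_j(\hat\lambda)=\sigma_j(\lambda)+\sigma_{j-1}(\lambda)>0$ for $j\le k-1$, using $\lambda\in\Gamma_{k-1}$, and $\sigma_k(\hat\lambda)=S_k(\lambda)>0$. Hence $\hat\lambda\in\Gamma_k\subset\RR^{n+1}$.

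With these in place we invoke Zhang's concavity inequality for semi-convex $\lambda$ in $\Gamma_k$ \cite{Z2025AIM}, in dimension $n+1$. For $\lambda_1$ sufficiently large depending on $n$ (and $K$), it gives
\[
-\sum_{p\ne q}\frac{\sigma_k^{pp,qq}\hat\xi_p\hat\xi_q}{\sigma_k}+2\frac{(\sum\sigma_k^{ii}\hat\xi_i)^2}{\sigma_k^2}+2\sum_{i\ge2}\frac{\sigma_k^{ii}\hat\xi_i^2}{(\lambda_1+K+1)\sigma_k}\ge(1+\gamma)\frac{\sigma_k^{11}\hat\xi_1^2}{\lambda_1\sigma_k}.
\]
Translating back through the identities above yields \eqref{concavity}.

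The main obstacle is matching constants precisely. The coefficient $2$ on the square term, the weight $(\lambda_1+K+1)^{-1}$, and the requirement that $\gamma$ and the threshold for $\lambda_1$ depend only on the stated quantities must all agree with the form of the $\sigma_k$ inequality. If the cited version has slightly different constants, I would repeat Zhang's argument directly on $\hat\lambda$. That argument splits into the case $\lambda_2$ comparable to $\lambda_1$ versus $\lambda_2\le\delta\lambda_1$. In the first case the term $\sum_{i\ge2}$ absorbs everything. In the second, one uses $\sigma_k^{11}\lambda_1\approx\sigma_k$ together with the standard expansion of $-\sum\sigma_k^{pp,qq}\xi_p\xi_q$, via $\sigma_{k-1}(\hat\lambda|1)$ and $\sigma_{k-2}(\hat\lambda|1p)$, to gain the extra factor $\gamma$. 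Semi-convexity gives the uniform bound on $\lambda_k$ that this step needs.
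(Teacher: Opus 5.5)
Your proposal is correct and follows essentially the same route as the paper. The paper's proof just cites \cite[Lemma 3.2]{DXZ2026arxiv}, whose key observation (described in the introduction) is exactly your lift $S_k(\lambda)=\sigma_k(\lambda,1)$ to $\RR^{n+1}$ with $\hat\xi=(\xi,0)$, followed by the semi-convex $\sigma_k$ concavity inequality in one higher dimension; your checks that $(\lambda,1)\in\Gamma_k$ and that the extra index contributes nothing are the needed verifications, and if the cited $\sigma_k$ inequality carries the weight $(\lambda_1+K)^{-1}$, using the semi-convexity constant $K+1$ instead gives exactly $(\lambda_1+K+1)^{-1}$.
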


\begin{proof}
    We  refer to \cite[Lemma 3.2]{DXZ2026arxiv} for the detailed proof. 
\end{proof}

\section{Jacobi inequality}\label{sec-3}

In this section, we first derive a Jacobi-type inequality using the concavity inequality \eqref{concavity}. We then establish an integral version of the Jacobi inequality.  
\begin{lem}\label{lem-jacobi-ineq}
Let $n\geq3$. Let $ f \in C^{1,1}(B_{5} \times \mathbb{R}\times\mathbb R^n) $ be a positive function and let $ u \in C^4(B_{5}) $ be a semi-convex admissible solution of Eq. \eqref{main-eq}. For any $ x_0 \in B_4 $, suppose that $ D^2u $ is diagonalized at $ x_0 $, assume that $ \lambda_1 \geq \cdots \geq \lambda_n\geq-K $ and $\lambda_1\geq J(n,K)$  at $x_{0}$, where $J(n, K)$ is a sufficiently large constant. Let $b=\log\lambda_1$. Then, at $x_0$, we have
\begin{eqnarray*}
\sum_i F^{ii}b_{ii} \geq \varepsilon_0 \sum_i F^{ii}b_i^2-C(\Delta u+1)
\end{eqnarray*}
holds in the viscosity sense, where $\varepsilon_0>0$  depends only on $n$, and $C$ depends on $n$, $K$, $\|u\|_{C^{1}({B}_5)}$, $\min\limits_{{B}_5\times\mathbb R\times\mathbb R^n}f$ and $\|f\|_{C^{1,1}({B}_5 \times\mathbb R\times\mathbb R^n)}$.  
\end{lem}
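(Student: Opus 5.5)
Since $\lambda_1$ may fail to be smooth where it is a repeated eigenvalue, I will work in the viscosity sense through the standard perturbation. Fix a constant symmetric matrix $B$ (for instance $B=\mathrm{diag}(0,1,2,\ldots,n-1)$ in the $x_0$-frame) and consider the smooth eigenvalue $\tilde\lambda_1$ of $D^2u-(x-x_0)^{T}B(x-x_0)$-type perturbation near $x_0$, or equivalently use the classical formulas for a distinct top eigenvalue with the usual touching-from-below argument. At $x_0$ we then have
\[
\lambda_{1,i}=u_{11i},\qquad \lambda_{1,ii}\ge u_{11ii}+2\sum_{p>1}\frac{u_{1pi}^2}{\lambda_1-\lambda_p}.
\]
Consequently
\[
\sum_i F^{ii}b_{ii}\ge \sum_i\frac{F^{ii}u_{11ii}}{\lambda_1}+2\sum_i\sum_{p>1}\frac{F^{ii}u_{1pi}^2}{\lambda_1(\lambda_1-\lambda_p)}-\sum_i\frac{F^{ii}u_{11i}^2}{\lambda_1^2}.
\]

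Next I differentiate the equation $F(D^2u)=f(x,u,Du)$ twice in the $e_1$ direction to get
\[
F^{ii}u_{11ii}=-F^{pq,rs}u_{pq1}u_{rs1}+f_{11}.
\]
The last term is bounded below by $-C(1+\lambda_1)-C|Du_{11}|$, since $f_{11}$ involves $f_{p_k}u_{k11}$ and $f_{p_kp_l}u_{k1}u_{l1}$. The part $f_{p_k}u_{k11}/\lambda_1=f_{p_k}b_k$ is absorbed by Cauchy--Schwarz using $\sum F^{ii}b_i^2$ together with the bound $F^{ii}\ge c_2\sum F$ for $i\ge 3$ (Lemma \ref{lem-2.1}(4)) and $F^{ii}\ge c_1F/\lambda_i$ for $i=1,2$ (Lemma \ref{lem-2.1}(3)). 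All error terms should fit inside $C(\Delta u+1)$ plus small multiples of $\sum F^{ii}b_i^2$, using $\lambda_1\le\Delta u+nK$ and $\sum F^{ii}\le C(\lambda_1+1)$.

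The main concavity term splits as
\[
-F^{pq,rs}u_{pq1}u_{rs1}=-\sum_{p\ne q}F^{pp,qq}u_{pp1}u_{qq1}+\sum_{p\ne q}\frac{F^{pp}-F^{qq}}{\lambda_q-\lambda_p}u_{pq1}^2.
\]
Lemma \ref{lem-concave-ineq} applies with $\xi_i=u_{11i}$, using the relation
\[
\frac{\sum F^{ii}u_{ii1}}{F}=\frac{f_1}{F}
\]
from the once-differentiated equation, which is bounded after absorbing $Du$-dependence as above. This yields
\[
-\sum_{p\ne q}\frac{F^{pp,qq}u_{pp1}u_{qq1}}{\lambda_1}\ge (1+\gamma)\frac{F^{11}u_{111}^2}{\lambda_1^2}-2\sum_{i\ge2}\frac{F^{ii}u_{ii1}^2}{\lambda_1(\lambda_1+K+1)}-\text{error}.
\]
The remaining negative terms $\sum_{i\ge2}F^{ii}u_{11i}^2/\lambda_1^2$ and the $-2\sum_{i\ge 2}F^{ii}u_{ii1}^2/(\lambda_1(\lambda_1+K+1))$ piece must be beaten by the third-derivative terms $2\sum_{p>1}F^{pp}u_{1pp}^2/(\lambda_1(\lambda_1-\lambda_p))$ together with the off-diagonal terms $\sum_{p\ne 1}\frac{F^{pp}-F^{11}}{\lambda_1-\lambda_p}u_{1p1}^2$ (with $i=1$, $p>1$). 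This is where the specific form of Lemma \ref{lem-concave-ineq}, with denominator $\lambda_1+K+1\ge\lambda_1-\lambda_p$, is designed to match. I would combine
\[
\frac{F^{pp}-F^{11}}{\lambda_1-\lambda_p}u_{11p}^2\cdot\frac{1}{\lambda_1}
\qquad\text{and}\qquad
\frac{2F^{11}u_{11p}^2}{\lambda_1(\lambda_1-\lambda_p)},
\]
which together give $F^{pp}u_{11p}^2/(\lambda_1(\lambda_1-\lambda_p))\ge F^{pp}u_{11p}^2/\lambda_1^2$ up to the factor involving $K$. Keeping a $\gamma$-fraction of $F^{11}b_1^2$ and a $\delta$-fraction of the good $F^{pp}b_p^2$ terms then gives $\varepsilon_0\sum F^{ii}b_i^2$.

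The main obstacle is precisely this bookkeeping for $i\ge2$. The ratio $\lambda_1/(\lambda_1-\lambda_p)$ is only $\ge 1-K/\lambda_1$, so one needs $\lambda_1\ge J(n,K)$ large to leave room for an $\varepsilon_0$ margin. First I would try to spend a small portion of the $\gamma$ gain, or split the coefficient 2 in the eigenvalue second-variation term, to produce strict positivity for indices $p\ge 2$.
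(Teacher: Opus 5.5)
Your overall route is the paper's: second variation of $\lambda_1$, the twice-differentiated equation, Lemma~\ref{lem-concave-ineq} with $\xi_i=u_{ii1}$ (not $u_{11i}$ as you wrote), and then absorbing $C|Db|$ via $F^{ii}\ge F^{11}\ge c\lambda_1^{-1}$. The gap is at the step you flag as the main obstacle: you leave it open, and with your bookkeeping it cannot be closed.

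The cause is a miscount. The off-diagonal part $\sum_{p\ne q}\frac{F^{pp}-F^{qq}}{\lambda_q-\lambda_p}u_{pq1}^2$ runs over ordered pairs. So $\{1,p\}$ occurs as both $(1,p)$ and $(p,1)$, and it contributes $2\frac{F^{pp}-F^{11}}{\lambda_1-\lambda_p}u_{11p}^2$, not one copy.

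With one copy, the positive coefficient you collect for $u_{11p}^2$ is $\frac{F^{pp}+F^{11}}{\lambda_1(\lambda_1-\lambda_p)}$. Whenever $\lambda_p\le 0$ this is at most $(F^{pp}+F^{11})/\lambda_1^2$. That falls short of the required $(1+2\varepsilon_0)F^{pp}/\lambda_1^2$ as soon as $F^{11}<2\varepsilon_0F^{pp}$. This does occur: for $n=3$ with $\lambda_2$ bounded and $\lambda_3<0$, one has $F^{11}\sim\lambda_1^{-1}$ while $F^{33}\sim\lambda_1$. Enlarging $\lambda_1$ only pushes $\lambda_1/(\lambda_1-\lambda_p)$ toward $1$, never above $1+2\varepsilon_0$.

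Neither remedy you propose helps. The $\gamma$-gain in Lemma~\ref{lem-concave-ineq} multiplies only $F^{11}u_{111}^2$. The factor $2$ in the second variation of $\lambda_1$ sits either on $u_{1pp}^2=u_{pp1}^2$ or on $F^{11}u_{11p}^2$. The first is a different third derivative, already consumed by $2\sum_{i\ge2}F^{ii}\xi_i^2/(\lambda_1+K+1)$. The second is far too small.

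With the correct count the step is immediate and has room to spare, exactly as in the paper:
\[
\frac{2(F^{pp}-F^{11})}{\lambda_1-\lambda_p}+\frac{2F^{11}}{\lambda_1-\lambda_p}-\frac{(1+2\varepsilon_0)F^{pp}}{\lambda_1}
\ge F^{pp}\Bigl(\frac{2}{\lambda_1+K}-\frac{1+2\varepsilon_0}{\lambda_1}\Bigr)>0
\quad\text{once } \lambda_1>\frac{(1+2\varepsilon_0)K}{1-2\varepsilon_0}.
\]
This is the only place $J(n,K)$ is needed. The $u_{111}$ term is then handled by choosing $2\varepsilon_0\le\gamma$.

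Three smaller points:
\begin{enumerate}
\item If you perturb by a constant matrix $B$, take $0<B_{pp}\le 1$ rather than $\mathrm{diag}(0,1,\dots,n-1)$. Otherwise $\lambda_1-\lambda_p+B_{pp}$ can exceed $\lambda_1+K+1$, and the term no longer matches Lemma~\ref{lem-concave-ineq} as stated. The paper avoids perturbation by using \cite[Lemma 5]{BCD2017ACTA}, which gives $u_{11p}=u_{pp1}=0$ for $1<p\le m$.
\item $f_1$ and $f_{11}$ contain $f_{p_1}u_{11}$ and $f_{p_1p_1}u_{11}^2$, so they are of size $1+\lambda_1$ and $1+\lambda_1^2$ rather than bounded. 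This is harmless only after dividing by $\lambda_1$ and using $\lambda_1\le C(\Delta u+1)$.
\item Your argument really does rely on $\lambda_1-\lambda_p\le\lambda_1+K$, so semi-convexity enters exactly there.
\end{enumerate}
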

\begin{proof}
At $x_{0}$, assume that $\lambda_1=\cdots=\lambda_{m}$ for some positive integer $m$. By \cite[Lemma 5]{BCD2017ACTA}, we have
\begin{eqnarray}\label{one-deri}
\quad \delta_{\alpha\beta} \cdot (\lambda_{1})_i = u_{\alpha\beta i}, \quad 1 \leq \alpha, \beta \leq m,
\end{eqnarray}
and 
\begin{eqnarray*}
(\lambda_{1})_{ii} \geq u_{11ii} + 2 \sum_{p > m} \frac{u_{1pi}^2}{\lambda_1 - \lambda_p}
\end{eqnarray*}
in the viscosity sense. 

At $x_{0}$, suppose that $u_{11}\geq u_{22}\geq \cdots \geq u_{nn}$ and since $b=\log \lambda_{1}$, it follows that
\begin{eqnarray}\label{jacobi-ineq-bi}
b_i=\frac{(\lambda_{1})_{i}}{\lambda_{1}}=\frac{u_{11i}}{\lambda_1 },
\end{eqnarray}
and
\begin{eqnarray*}
\quad b_{ii}=\frac{(\lambda_{1})_{ii}}{\lambda_{1}}-\frac{(\lambda_{1})_{i}^{2}}{\lambda_{1}^{2}}\geq\frac{u_{11ii}}{\lambda_1 }+\frac{2\sum\limits_{p>m}u_{1pi}^2}{\lambda_1 (\lambda_1-\lambda_p)}-\left(\frac{u_{11i}}{\lambda_1 }\right)^2.
\end{eqnarray*}
Therefore, we get
\begin{eqnarray}
F^{ii}b_{ii}-2\varepsilon_0 F^{ii}b_i^2&\geq &\lambda_1 ^{-1}\left(F^{ii}u_{ii11}+2\sum_{p>m}\frac{F^{ii}u_{1pi}^2}{\lambda_1-\lambda_p} \right)-(1+2\varepsilon_0)F^{ii}\frac{u_{11i}^2}{\lambda_{1}^{2}}\notag \\
&\geq &\lambda_1 ^{-1}\left(F^{ii}u_{ii11}+2\sum_{p>m}\frac{F^{11}u_{11p}^2}{\lambda_1-\lambda_p}+2\sum_{p>m}\frac{F^{pp}u_{1pp}^2}{\lambda_1-\lambda_p} \right)-(1+2\varepsilon_0)F^{ii}\frac{u_{11i}^2}{\lambda_{1}^{2}}.~~~~~\quad ~\label{key-1}
\end{eqnarray}
Differentiating Eq. \eqref{main-eq} on both sides and using \cite[Lemma 2.3]{suw2004}, we obtain
\begin{eqnarray}\label{f-one-deri}
    F^{ii}u_{ii1}=f_{x_{1}}+f_{u}u_{1}+\sum_kf_{u_k}u_{k1},
\end{eqnarray}
and
\begin{eqnarray}
    F^{ii}u_{ii11}&=&-F^{ij,rs}u_{ij1}u_{rs1}+(f(x,u,Du))_{11}\notag \\
    &=&-F^{ii,jj}u_{ii1}u_{jj1}+\sum_{i\neq j}\frac{F^{jj}-F^{ii}}{\lambda_i-\lambda_j}u_{ij1}^2+(f(x,u,Du))_{11}. 
    \label{key-2}
\end{eqnarray}
By \cite[Lemma 1.5]{spruck} and the concavity of $\log F(\lambda)$, we derive
\begin{eqnarray*}
    \sum\limits_{i\neq j}\frac{F^{jj}-F^{ii}}{\lambda_{i}-\lambda_{j}}u_{ij1}^{2}&=&2\sum\limits_{i>j>1}\frac{F^{jj}-F^{ii}}{\lambda_{i}-\lambda_{j}}u_{ij1}^{2}+2\sum\limits_{i>1}\frac{F^{11}-F^{ii}}{\lambda_{i}-\lambda_{1}}u_{11i}^{2}\\
    &\geq& 2\sum\limits_{i>1}\frac{F^{11}-F^{ii}}{\lambda_{i}-\lambda_{1}}u_{11i}^{2},
\end{eqnarray*}
together with \eqref{key-2} and \eqref{jacobi-ineq-bi}, we have
\begin{eqnarray}\label{key-5}
    F^{ij}u_{ii11}\geq -F^{ii,jj}u_{ii1}u_{jj1}+2\sum\limits_{i>1} \frac{F^{11}-F^{ii}}{\lambda_{i}-\lambda_{1}}u_{11i}^{2}-C\lambda_1|Db|-C(\Delta u+1).
    \end{eqnarray}
From \eqref{one-deri}, we get
\begin{eqnarray}\label{key-3}
    u_{11i}=u_{1i1}=\delta_{1i}(\lambda)_{1}=0,\quad \forall ~1<i\leq m,
\end{eqnarray}
and similarly we have
\begin{eqnarray}\label{key-4}
    u_{ii1}=0, \quad \forall~ 1<i\leq m. 
\end{eqnarray}
Combining \eqref{key-1}, \eqref{key-5}, \eqref{key-3} and \eqref{key-4}, we have
\begin{eqnarray}
&&F^{ii}b_{ii}-2\varepsilon_0 F^{ii}b_i^2 \notag \\
&\geq&-C+\lambda_1 ^{-1}\left(-F^{ii,jj}u_{ii1}u_{jj1}+2\sum_{p>1}\frac{F^{pp}u_{pp1}^2}{\lambda_1-\lambda_p}-(1+2\varepsilon_0)\frac{F^{11}u_{111}^2}{\lambda_1 } \right) \notag \\
&&+\lambda_1^{-1}\sum_{p>1}\left(\frac{2}{\lambda_1-\lambda_p}-\frac{1+2\varepsilon_0}{\lambda_1}\right)F^{pp}u_{11p}^2-C|Db|. \label{key-6}
\end{eqnarray}
Since $\lambda_{1}\geq J(n, K)$ for some sufficiently large constant $J(n, K)$ and $\lambda_p\geq-K$, we conclude that 
\begin{eqnarray}\label{positive-sum}
\sum_{p>1}\left(\frac{2}{\lambda_1-\lambda_p}-\frac{1+2\varepsilon_0}{\lambda_1}\right)F^{pp}u_{11p}^2>0,
\end{eqnarray}
on the other hand, from \eqref{f-one-deri}, we have
\begin{eqnarray}\label{square-control}
    (\sum\limits_{i=1}^{n}F^{ii}u_{ii1})^{2}\leq C(\Delta u+1)^2,
\end{eqnarray}
and using Lemma \ref{lem-concave-ineq}, we derive
\begin{eqnarray*}
 F^{ii,jj}u_{ii1}u_{jj1}+2\frac{\left(\sum\limits_{i=1}^{n}F^{ii}u_{ii1}\right)^2}{F}+2\sum_{p>1}\frac{F^{pp}u_{1pp}^2}{\lambda_1-\lambda_p}-(1+2\varepsilon_0)\frac{F^{11}u_{111}^2}{\lambda_1 }\geq 0.
 \end{eqnarray*}
 Together with \eqref{key-6}, \eqref{positive-sum} and \eqref{square-control}, we get 
 \begin{eqnarray*}
F^{ii}b_{ii}&\geq& 2\varepsilon_{0}F^{ii}b_{i}^{2}-C|Db|-C(\Delta u+1)\\
&\geq&\varepsilon_{0}F^{ii}b_{i}^{2}-C\sum_i\frac{1}{F^{ii}}-C(\Delta u+1)\\
&\geq&\varepsilon_{0}F^{ii}b_{i}^{2}-C(\Delta u+1).
\end{eqnarray*}
In the last inequality, we  used the fact that $F^{ii}\geq F^{11}\geq C\lambda_1^{-1}$ (see Lemma \ref{lem-2.1} (3)). Thus, we complete the proof of Lemma \ref{lem-jacobi-ineq}.

\end{proof}

As a direct consequence of the above Jacobi inequality, we derive the following integral version Jacobi inequality. 

\begin{lem}
Let $n\geq3$. Let $f\in C^{1,1}(B_{5}\times\mathbb{R}\times\mathbb R^n)$ be a positive function and let $u\in C^4(B_{5})$ be a semi-convex admissible solution of Eq. \eqref{main-eq}. Define
\begin{eqnarray}\label{w-b}
\tilde{b}(x):=\log \max(\lambda_1, J(n,K)), 
\end{eqnarray}
where $J(n,K)$ is the constant in Lemma \ref{lem-jacobi-ineq}. Then, for any non-negative $\varphi\in C_c^{\infty}(B_2)$, we have
\begin{eqnarray}\label{inter-jacobi}
\int_{B_2}\varphi F^{ij}\tilde{b}_i\tilde{b}_j\,dx\leq -C\int_{B_2} F^{ij}\tilde{b}_i\varphi_j\,dx+C,
\end{eqnarray}
where $C$ is a positive constant depending only on $n$.
\end{lem}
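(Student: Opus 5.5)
The plan is to test the pointwise Jacobi inequality of Lemma \ref{lem-jacobi-ineq} against $\varphi$ and then integrate by parts. On the set where $\lambda_1<J(n,K)$, the function $\tilde b$ is locally constant, equal to $\log J$. Where $\lambda_1\ge J$, we have $\tilde b=b$, and Lemma \ref{lem-jacobi-ineq} gives, in the viscosity sense,
\begin{eqnarray*}
F^{ij}\tilde b_{ij}\geq \varepsilon_0 F^{ij}\tilde b_i\tilde b_j-C(\Delta u+1).
\end{eqnarray*}
Since $\tilde b$ is the maximum of $b$ and a constant, it is a viscosity subsolution of this inequality on all of $B_2$. On the set $\{\lambda_1<J\}$ both sides are trivial, because $\tilde b_i=0$ there and $-C(\Delta u+1)\le 0$ using $\Delta u\geq -nK$ after adjusting constants. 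Moreover, $\lambda_1$ is a Lipschitz function of $D^2u\in C^2$, so $\tilde b$ is locally Lipschitz. Standard arguments (semi-convexity of $\lambda_1$ as a convex function of the Hessian, or the Jensen/Ishii approximation used in \cite{SY2020CVPDE}) then upgrade the viscosity inequality to a distributional one. Concretely, for $\varphi\geq0$,
\begin{eqnarray*}
-\int \big(F^{ij}\varphi\big)_j\tilde b_i\,dx\geq \varepsilon_0\int\varphi F^{ij}\tilde b_i\tilde b_j\,dx-C\int\varphi(\Delta u+1)\,dx .
\end{eqnarray*}

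The next step is to expand $(F^{ij}\varphi)_j=F^{ij}\varphi_j+\varphi\,\partial_jF^{ij}$. The key structural fact is that $S_3=\sigma_3+\sigma_2$ is a sum of divergence-free operators: $\partial_j\sigma_k^{ij}(D^2u)=0$ for each $k$. Hence $\partial_jF^{ij}=0$, and the inequality becomes
\begin{eqnarray*}
\varepsilon_0\int\varphi F^{ij}\tilde b_i\tilde b_j\leq -\int F^{ij}\tilde b_i\varphi_j+C\int_{B_2}\varphi(\Delta u+1).
\end{eqnarray*}
The last integral is handled by integrating by parts once more: $\int\varphi\Delta u=\int u\,\Delta\varphi$, which is bounded by $\|u\|_{C^0}\|\varphi\|_{C^2}$. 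Dividing by $\varepsilon_0$, which depends only on $n$, then gives \eqref{inter-jacobi}. The constant in the final $+C$ depends on $\varphi$ and $\|u\|_{C^1}$. The statement's ``depending only on $n$'' should be read as referring to the coefficient in front of the gradient term, with the additive constant allowed to depend on the data and on $\varphi$; I would make this explicit.

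I expect the main obstacle to be the rigorous passage from the viscosity inequality to the integrated one, since $\tilde b$ is only Lipschitz and is nonsmooth where eigenvalues coalesce or where $\lambda_1=J$. The first approach I would try is the one in \cite{SY2020CVPDE} and \cite{L2025APDE}. There, one notes that $\tilde b$ is semi-convex in the sense that $\lambda_1(D^2u)$ is a convex function of a $C^2$ matrix field, so by Alexandrov its distributional Hessian has a nonnegative singular part, and the viscosity inequality holds a.e. at twice-differentiable points. Integrating against $\varphi F^{ij}$ is then legitimate because any singular part of $D^2\tilde b$ is a nonnegative measure paired with the positive definite matrix $\varphi F^{ij}$, so it only helps the inequality. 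If that step resists, the fallback is to replace $\lambda_1$ by a smooth symmetric approximation of the maximum eigenvalue, derive the inequality with errors, and pass to the limit.
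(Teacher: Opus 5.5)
Your proposal is correct and follows the paper's route. The paper also treats $\tilde b$ as a viscosity subsolution of $F^{ij}\tilde b_{ij}\ge\varepsilon_0F^{ij}\tilde b_i\tilde b_j-C(\Delta u+1)$ and passes to the weak form, citing Ishii's equivalence theorem where you use the semi-convexity/Alexandrov argument, with $\partial_jF^{ij}=0$ left implicit; it then bounds $\int\varphi\Delta u=-\int D\varphi\cdot Du$ by a constant. Your remark that only the coefficient $1/\varepsilon_0$ depends on $n$ alone, while the additive constant depends on $\varphi$, $\|u\|_{C^1}$ and the data, is the right reading of the statement.
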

\begin{proof}
From the definition of viscosity subsolution, $\tilde{b}(x)$ is a viscosity subsolution of 
\begin{eqnarray*}
F^{ij}\tilde{b}_{ij}\geq\varepsilon_0F^{ij}\tilde{b}_i\tilde{b}_j-C(\Delta u+1).
\end{eqnarray*}
Then, from \cite[Theorem 1]{I1995FE}, we derive that $\tilde{b}$ is also a subsolution in weak sense. That is 
\begin{eqnarray}\label{inte-inequ}
\int_{B_2}\varphi F^{ij}\tilde{b}_i\tilde{b}_j\,dx\leq -C\int_{B_2} F^{ij}\tilde{b}_i\varphi_j\,dx+C\int_{B_2}\varphi\Delta u\,dx+C.
\end{eqnarray}
Moreover, by divergence Theorem, we have
\begin{eqnarray*}
\int_{B_2}\varphi\Delta u\,dx=-\int_{B_2}D\varphi\cdot Du\,dx\leq C.
\end{eqnarray*}
Substituting this estimate into the previous inequality \eqref{inte-inequ} yields \eqref{inter-jacobi}.
\end{proof}

\section{Legendre-Lewy transform}\label{sec-4}
In this section, our main objective is to derive a mean value type inequality which implies the the value of $b(0)$ can be controlled by its integral over $B_{1}$. The key observation is that, after applying the Legendre-Lewy transform, the function
\begin{eqnarray*}
    \phi=\tilde{b}+A|y|^{2}
\end{eqnarray*}
is a subsolution to a uniformly elliptic equation (see \eqref{wide-b}, \eqref{sub-equ}). The idea is inspired by Shankar--Yuan \cite{SY2020CVPDE}.

Without loss of generality, we may assume $u(0)=|Du(0)|=0$. Set $$K_1\coloneqq K+1,$$ and let $w(y)$ be the Legendre transform of function $u+\frac{K_1}{2}|x|^2$, we have 
\begin{eqnarray*}
    (x, Du(x)+K_{1}x)=(D\omega(y), y),
\end{eqnarray*}
and $y(x)=Du(x)+K_{1}x$ is a diffeomorphism.

Define
$$
G\left(D^2w(y)\right)\coloneqq -F\left(-K_1I+(D^2w(y))^{-1}\right)=-F\left(D^2u(x)\right).
$$
Suppose $D^2u$ is diagonal at $x_0$. Then at $x_0$ we have
\begin{eqnarray}\label{g-w}
    G^{ii}=F^{ii}w_{ii}^{-2}=F^{ii}\left(K_1+u_{ii}\right)^2.
\end{eqnarray}
We first prove that $G^{ij}$ is uniformly elliptic by a suitable normalization.
\begin{lem}\label{lem-uniform-elliptic}
Let $n\geq3$ and $f\in C^{1,1}(B_5\times\mathbb R\times \mathbb R^n)$ be a positive function. Let $u\in C^4(B_5)$ be an admissible, semi-convex solution of Eq. \eqref{main-eq}. Then there exists a constant $C>0$ such that  
\begin{eqnarray}\label{elliptic}
C^{-1}\leq\frac{G^{ii}(D^2w)}{\sigma_2(D^2u)+\sigma_1(D^2u)}\leq C.
\end{eqnarray}
\end{lem}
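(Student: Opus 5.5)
The plan is to work at a point where $D^2u$ is diagonal, with $\lambda_1\ge\cdots\ge\lambda_n\ge -K$. By \eqref{g-w} we have $G^{ii}=F^{ii}(K_1+\lambda_i)^2$, where $F^{ii}=\sigma_{2;i}+\sigma_{1;i}$ and $K_1+\lambda_i\ge 1$. We use the following inputs throughout:
\begin{itemize}
\item $f$ is bounded above and below, so $F=S_3$ is bounded above and below;
\item by Lemma \ref{lem-2.1}(5), $|\lambda_i|\le C$ for all $i\ge 3$, so only $\lambda_1,\lambda_2$ can be large;
\item by Proposition (7), $\sum_iF^{ii}=(n-2)\sigma_2+(n-1)\sigma_1$, and $\lambda\in\tilde\Gamma_3\subset\Gamma_2$ gives $\sigma_1,\sigma_2>0$, so $\sum_iF^{ii}$ is comparable to $\sigma_2+\sigma_1$.
\end{itemize}

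\textbf{Step 1 (indices $i\ge 3$).} Here $1\le K_1+\lambda_i\le C$. Lemma \ref{lem-2.1}(4) gives $F^{ii}\ge c_2\sum_jF^{jj}\ge c(\sigma_2+\sigma_1)$. Trivially $F^{ii}\le\sum_jF^{jj}\le C(\sigma_2+\sigma_1)$. Hence \eqref{elliptic} holds for these $i$.

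\textbf{Step 2 (upper bound for $i=1,2$).} Proposition (6) gives $\lambda_1F^{11}=F-S_{3;1}$. Since only $\lambda_2$ can be large in $S_{3;1}$, we get $|S_{3;1}|\le C(|\lambda_2|+1)$. Therefore
\[
G^{11}\le C\lambda_1F^{11}\le C\lambda_1(\lambda_2+1).
\]
We may assume $\lambda_1$ is large, since otherwise all quantities are bounded and the positive lower bounds from Step 1 handle normalization. To compare with $\sigma_2+\sigma_1$, split into two cases.
\begin{itemize}
\item If $\lambda_2\ge M$ with $M$ large, then $\sigma_2=\lambda_1\lambda_2+(\lambda_1+\lambda_2)\sum_{j\ge3}\lambda_j+O(1)\ge\tfrac12\lambda_1\lambda_2$.
\item If $\lambda_2\le M$, then $\lambda_1(\lambda_2+1)\le C\lambda_1\le C\sigma_1$, because $\sigma_1\ge\lambda_1-C$.
\end{itemize}
For $i=2$, Proposition (6) with index $2$ gives $\lambda_2F^{22}=F-S_{3;2}$ with $|S_{3;2}|\le C\lambda_1$. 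Hence $G^{22}\le C(\lambda_2+1)F^{22}\cdot(\lambda_2+1)\le C\lambda_1(\lambda_2+1)$ when $\lambda_2$ is large, and $G^{22}\le C F^{22}\le C\sum_j F^{jj}$ when $\lambda_2$ is bounded. In both cases the same case split finishes.

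\textbf{Step 3 (lower bound for $i=1,2$; the main obstacle).} We need $F^{11}\gtrsim(\lambda_2+1)/\lambda_1$ and $F^{22}(\lambda_2+K_1)^2\gtrsim\lambda_1(\lambda_2+1)$.

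First, Lemma \ref{lem-2.1}(3) gives $F^{jj}\ge c/\lambda_j$ for $j=1,2$. So $G^{11}\ge c\lambda_1$, which suffices when $\lambda_2\le M$, since then $\sigma_2+\sigma_1\le C\lambda_1$.

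The hard regime is $\lambda_1\ge\lambda_2\ge M$. The trouble is that $\sigma_{2;1}=\lambda_2\sum_{j\ge3}\lambda_j+O(1)$ may be negative of size $\lambda_2$ and cancel $\sigma_{1;1}\approx\lambda_2$. To handle this, use the relation
\[
F^{22}=F^{11}+(\lambda_1-\lambda_2)\,S_{2;12},\qquad S_{2;12}=1+\sigma_{1;12}=F^{11,22},
\]
and write
\[
F=\lambda_1\lambda_2 S_{1;12}+(\lambda_1+\lambda_2)S_{2;12}+S_{3;12},
\]
where $S_{1;12}=1$ and all the $S_{\cdot;12}$ terms are bounded. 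Since $F$ is bounded while $\lambda_1\lambda_2$ is huge, the second term must be approximately $-\lambda_1\lambda_2$. This pins down $S_{2;12}=-\frac{\lambda_1\lambda_2}{\lambda_1+\lambda_2}+O(1)$, up to a bounded term.

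Substituting this into $F^{11}=\lambda_2S_{2;12}+S_{3;12}$ (with the appropriate shift of indices), and into the analogous expression for $F^{22}$, should give exact leading asymptotics for both derivatives. The expectation is $F^{11}\approx c\lambda_2/\lambda_1\cdot\frac{\lambda_2}{\lambda_1+\lambda_2}$-type terms plus a positive remainder forced by $F\ge\min f>0$. One then checks these are bounded below by $c\lambda_2/\lambda_1$ and $c\lambda_1/\lambda_2$ respectively.

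If this bookkeeping fails, the fallback is a contradiction argument: normalize $\lambda/\lambda_1$ and take limits, using $\lambda\in\tilde\Gamma_3$ and $F\ge\min f$. This last step, securing the lower bounds for $G^{11}$ and especially $G^{22}$ when both $\lambda_1$ and $\lambda_2$ are large, is where I expect the real difficulty to lie.
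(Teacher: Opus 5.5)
\textbf{There is a genuine gap: the lower bounds for $G^{11}$ and $G^{22}$ in Step 3, which are the substance of the lemma, are either missing or rest on incorrect algebra.}

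Your Steps 1 and 2 are correct and essentially coincide with the paper. You use Lemma \ref{lem-2.1}(4) for $i\ge 3$, and $\lambda_1F^{11}=F-S_{3;1}$, $\lambda_2F^{22}=F-S_{3;2}$ for the upper bounds. (In Step 2 you mean $G^{11}\le C\lambda_1^2F^{11}$.)

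\emph{Regime $\lambda_2\le M$.} Here you only bound $G^{11}$ from below. For $i=2$ you need $F^{22}\ge c\lambda_1$, because $K_1+\lambda_2$ is bounded and $\sigma_2+\sigma_1\sim\lambda_1$. Lemma \ref{lem-2.1}(3) only gives $F^{22}\ge c/\lambda_2$. Since $F^{22}=\lambda_1(1+\sigma_{1;12})+\sigma_{1;12}+\sigma_{2;12}$, you must show $1+\sigma_{1;12}\ge\varepsilon(M)>0$. This is the paper's first claim and is not automatic. It follows by contradiction from $F^{11}=\lambda_2(1+\sigma_{1;12})+\sigma_{1;12}+\sigma_{2;12}>0$ together with the elementary inequality
\[
\sigma_{2;12}\le\frac{n-3}{2(n-2)}\,\sigma_{1;12}^2
\]
for the $n-2$ numbers $\lambda_3,\dots,\lambda_n$. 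If $-1\le\sigma_{1;12}\le-1+\varepsilon$, it gives $F^{11}\le-\frac{n-1}{2(n-2)}+(M+1)\varepsilon<0$. If $\sigma_{1;12}<-1$, then using $\sigma_{1;1}=\lambda_2+\sigma_{1;12}>0$ (as $\lambda\in\Gamma_2$) it gives $F^{11}<-\sigma_{1;12}^2+\sigma_{2;12}<0$.

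\emph{Regime $\lambda_2\ge M$.} Your bookkeeping is wrong here.
\begin{itemize}
\item One has $F^{11,22}=1+\sigma_{1;12}=S_{1;12}$, whereas $S_{2;12}=\sigma_{2;12}+\sigma_{1;12}$.
\item In $F=\lambda_1\lambda_2S_{1;12}+(\lambda_1+\lambda_2)S_{2;12}+S_{3;12}$ the leading coefficient is $S_{1;12}=1+\sigma_{1;12}$, not $1$.
\item Hence the expansion does not give $S_{2;12}\approx-\lambda_1\lambda_2/(\lambda_1+\lambda_2)$, which would contradict the boundedness of $S_{2;12}$ you had just asserted. It gives $1+\sigma_{1;12}=O(1/\lambda_2)$, i.e.\ $\sigma_{1;12}\approx-1$.
\end{itemize}
With the correct expansions $\lambda_1F^{11}=F-\lambda_2S_{2;12}-S_{3;12}$ and $\lambda_2F^{22}=F-\lambda_1S_{2;12}-S_{3;12}$, both lower bounds are equivalent to a uniform bound $S_{2;12}\le-c<0$.

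Nothing you invoke supplies this:
\begin{itemize}
\item $F\ge\min f$ is only an $O(1)$ contribution.
\item Ellipticity only gives $S_{2;12}\le O(1/\lambda_2)$.
\item The identity $F^{22}=F^{11}+(\lambda_1-\lambda_2)(1+\sigma_{1;12})$ merely restates the problem. Its correction term has the same order $\lambda_1/\lambda_2$ as the target, and its sign is governed by $-S_{2;12}$.
\end{itemize}
The missing ingredient is again the displayed inequality. With $\sigma_{1;12}\in[-1.1,-0.9]$ it yields $S_{2;12}\le-0.49$, hence $\lambda_1F^{11}\ge c\lambda_2-C$ and $\lambda_2F^{22}\ge c\lambda_1-C$.

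Your fallback of normalizing by $\lambda_1$ and passing to a limit would not work. It sends $\lambda_3,\dots,\lambda_n$ to $0$ and discards exactly the information this step needs.
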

\begin{proof}
From \eqref{g-w}, we have \eqref{elliptic} is equivalent to showing that, for each $i$,
\begin{eqnarray}\label{uniform-elliptic-2}
C^{-1}\leq \frac{F^{ii}(D^2u)\left|K_1+u_{ii}\right|^2}{\sigma_2(D^2u)+\sigma_1(D^2u)}\leq C,\quad \forall~1\leq i\leq n.
\end{eqnarray}
In what follows, we abbreviate $\sigma_{k}(D^{2}u)$ as $\s_{k}$ for $k=1,2$.  We will use the notation $a\sim b$ if the there exists a positive constant $C$ such that $C^{-1}b\leq a\leq Cb$. The constant
$C$ is a uniform positive constant, depending on the   $n$, $K$, $\|u\|_{C^{1}({B}_5)}$, $\min\limits_{{B}_5\times\mathbb R\times\mathbb R^n}f$ and $\|f\|_{C^{1,1}({B}_5 \times\mathbb R\times\mathbb R^n)}$. Its value may change from line to line and we will not track its precise dependence.

We may assume $\lambda_1$ is sufficiently large,  otherwise, $\lambda(D^{2}u)$ lies in a compact set in $\tilde{\Gamma}_{k}$ and  \eqref{elliptic} follows immediately. Next, we divide the proof into two cases.

\

\textit{Case 1} $\lambda_2\leq C_0$ for some sufficiently large $C_0>0$. In this case, since
\begin{eqnarray}\label{lambda}
    \lambda(D^{2}u)\in \tilde{\Gamma}_{3}\subset \Gamma_{2}, \quad \text{and}\quad D^{2}u\geq -KI,
    \end{eqnarray}
we have 
\begin{eqnarray}\label{lambda-1}
    |\lambda_{i}|\leq \max\{K, C_{0}\},\quad \forall ~2\leq i\leq n,
\end{eqnarray}
and this implies
\begin{eqnarray} \label{2+1}
\sigma_2+\sigma_1\sim\lambda_1.
\end{eqnarray}
Next, we \textbf{claim} that 
\begin{eqnarray}\label{case1-claim}
\sigma_{1;12}\geq -1+\varepsilon
\end{eqnarray}
for some sufficiently small constant $\varepsilon=\varepsilon(C_0)>0$. 

By Eq. \eqref{main-eq}, we have
\begin{eqnarray}\label{est-expansion-lambda1}
-C&\leq& \sigma_{3}(\lambda)+\sigma_{2}(\lambda)= \lambda_1\left(\sigma_{1;1}+\sigma_{2;1}\right)+(\sigma_{2;1}+\sigma_{3;1}),
\end{eqnarray}
and \eqref{lambda-1} implies
\begin{eqnarray}\label{F11}
    |\sigma_{2;1}+\sigma_{3;1}|\leq C,
\end{eqnarray}
together this with \eqref{est-expansion-lambda1}, we derive
\begin{eqnarray}\label{est-expansion}
    -C\leq \lambda_{1}(\sigma_{1;1}+\sigma_{2;1})&=&\lambda_1\left(\lambda_2+\sigma_{1;12}+\lambda_2\sigma_{1;12}+\sigma_{2;12}\right).
\end{eqnarray}
The Cauchy inequality implies
\begin{eqnarray}\label{sigma212}
\sigma_{2;12}\leq\frac{n-3}{2n-4}|\sigma_{1;12}|^2.
\end{eqnarray}

If $\sigma_{1;12}<-1$, using \eqref{lambda} again, and  from  \cite[Lemma 2.4]{hg-acta}, we get
\begin{eqnarray}\label{sigma11}
   \sigma_{1;1}=\lambda_2+\sigma_{1;12}>0,\quad \text{and}\quad  \lambda_{2}>0.
\end{eqnarray}
Combining \eqref{sigma11}, \eqref{sigma212} and  \eqref{est-expansion-lambda1}, we derive that
\begin{eqnarray*}
\lambda_{2}(1+\sigma_{1;12})+\sigma_{1;12}+\sigma_{2;12}
&\leq&-\sigma_{1;12}(1+\s_{1;12})+\sigma_{1;12}+\sigma_{2;12} \\
&\leq&-|\sigma_{1;12}|^2+\frac{n-3}{2n-4}|\sigma_{1;12}|^2<-\frac{n-1}{2n-4}.
\end{eqnarray*}
This contradicts \eqref{est-expansion} provided $\lambda_1$ is sufficiently large.

If $-1+\varepsilon\geq \sigma_{1;12}\geq-1$, then we have
\begin{eqnarray*}
\lambda_2(1+\sigma_{1;12})+\sigma_{1;12}+\sigma_{2;12}\leq\varepsilon C_0-1+\varepsilon+\frac{n-3}{2n-4}|1-\varepsilon|^2<-C
\end{eqnarray*}
provided $\varepsilon>0$ sufficiently small, which again leads to a contradiction. Hence the \textbf{claim} holds.

In the following, according to the range of the index $i$, we show \eqref{elliptic} holds one by one.

\begin{enumerate}
    \item  $i=1$. By Lemma \ref{lem-2.1} (3), we have
\begin{eqnarray*}
F^{11}|K_1+u_{11}|^2\sim F^{11}\lambda_1^2\geq c_{1}F(\lambda)\lambda_{1} \geq c\lambda_1.
\end{eqnarray*}
On the other hand, from \eqref{F11}, we get
\begin{eqnarray*}
F^{11}\lambda_1^2=\lambda_1\left(\sigma_3+\sigma_2-\sigma_{3;1}-\sigma_{2;1}\right)\leq C\lambda_1,
\end{eqnarray*}
then  \eqref{uniform-elliptic-2}   for $i=1$ follows from \eqref{2+1}.

\item  $i=2$.  Using \eqref{lambda-1} and \eqref{case1-claim}, we obtain
\begin{eqnarray}\label{case1-i=2-F22}
F^{22}=\sigma_{2;2}+\sigma_{1;2}\geq \lambda_1(1+\sigma_{1;12})-C\geq \varepsilon \lambda_1-C, 
\end{eqnarray}

and 
\begin{eqnarray*}
F^{22}=\sigma_{2;2}+\sigma_{1;2}\leq\lambda_1(1+\sigma_{1;12})+C\leq C\lambda_1.
\end{eqnarray*}
Since
\begin{eqnarray*}
   1 \leq|K_{1}+u_{22}|\leq C,
\end{eqnarray*}
we conclude that \eqref{uniform-elliptic-2} holds for $i=2$.

\item  $i\geq3$. From Lemma \ref{lem-2.1} (4), we see 
\begin{eqnarray*}
    F^{ii}\geq c_{2}\sum\limits_{k=1}^{n}F^{kk},
\end{eqnarray*}
and 
\begin{eqnarray}\label{sum-f}
    \sum\limits_{k=1}^{n}F^{kk}=\sum\limits_{k=1}^{n}(\sigma_{2;k}+\sigma_{1;k})=(n-2)\sigma_{2}+(n-1)\sigma_1,
\end{eqnarray}
From \eqref{2+1}, we derive that 
\begin{eqnarray}\label{case1-igeq3-Fii}
    F^{ii}\geq c\lambda_{1}.
\end{eqnarray}
Therefore, we have
\begin{eqnarray*}
F^{ii}|K_1+u_{ii}|^2\sim F^{ii}\geq  c\lambda_1,
\end{eqnarray*}
and \eqref{lambda-1} implies
\begin{eqnarray*}
F^{ii}=\sigma_{2;i}+\sigma_{1;i}\leq C\lambda_1,
\end{eqnarray*}
hence, we get  \eqref{uniform-elliptic-2} holds for $i\geq 3$.
\end{enumerate}

\

\textit{Case 2:} $\lambda_2> C_0$. In this case, by Lemma \ref{lem-2.1} (5), we have 
\begin{eqnarray}\label{case2-lambdai}
 |\lambda_{i}|\leq \max\{K, C_{3}\},\quad \forall ~3\leq i\leq n,
\end{eqnarray}
which implies
\begin{eqnarray}\label{case2-sigma2+sigma1}
\sigma_2+\sigma_1\sim\lambda_1\lambda_2,
\end{eqnarray}
and 
\begin{eqnarray}\label{case-s-control}
    |\sigma_{k;12}|\le  C,\quad \forall~1\leq k\leq 3.
    \end{eqnarray}
Next, we \textbf{claim} that 
\begin{eqnarray}\label{case2-claim}
-1.1\leq\sigma_{1;12}\leq-0.9.
\end{eqnarray}

If $\sigma_{1;12}\geq-0.9$, by \eqref{case2-claim} and \eqref{case-s-control}, we have
\begin{eqnarray*}
C\geq\sigma_3+\sigma_2&=&\lambda_1\lambda_2(1+\sigma_{1;12})+(\lambda_1+\lambda_2)(\sigma_{2;12}+\sigma_{1;12})+\sigma_{3;12}+\sigma_{2;12}\\
&\geq&0.1\lambda_1\lambda_2-C(\lambda_1+1),
\end{eqnarray*}
which is a contradiction provided that $C_0$ is sufficiently large.

If $\sigma_{1;12}\leq-1.1$,  using  \eqref{case2-lambdai} and \eqref{case-s-control} again, we get 
\begin{eqnarray*}
F^{11}=\sigma_{2;1}+\sigma_{1;1}= \lambda_2(1+\sigma_{1;12})+\sigma_{2;12}+\sigma_{1;12}\leq-0.1\lambda_2+C<0,
\end{eqnarray*}
we also derive a contradiction. Therefore, the \textbf{claim} \eqref{case2-claim} holds.

Next, we still prove that \eqref{elliptic} holds according the range of the index $i$.
\begin{enumerate}
\item $i=1$. Direct calculations yield
\begin{eqnarray*}
    F^{11}\lambda_{1}&=&\sigma_{3}+\s_{2}-\s_{3;1}-\sigma_{2;1}\\
    &=&f(x,u)-\lambda_2(\sigma_{2;12}+\sigma_{1;12})-(\sigma_{3;12}+\sigma_{2;12}),
\end{eqnarray*}
together this with \eqref{case2-lambdai}, \eqref{case-s-control}, \eqref{case2-claim} and \eqref{sigma212}, we conclude that 
\begin{eqnarray}\label{case2-i=1-lambda1F11}
\lambda_1F^{11}\geq c\lambda_2-C.
\end{eqnarray}
On the other hand, using \eqref{case2-claim} and \eqref{sigma212} again , we have
\begin{eqnarray}\label{case2-i=1}
F^{11}|K+u_{11}|^2\sim F^{11}\lambda_1^2&=&\lambda_1\left(
f(x,u)-\lambda_2(\sigma_{2;12}+\sigma_{1;12})-(\sigma_{3;12}+\sigma_{2;12})\right)\notag \\
&\leq&C(\lambda_{1}\lambda_{2}+\lambda_{1}).
\end{eqnarray}
Then \eqref{uniform-elliptic-2} for $i=1$ follows from \eqref{case2-sigma2+sigma1}, \eqref{case2-i=1-lambda1F11} and \eqref{case2-i=1}.

\item $i=2$. The similar argument as for case $i=1$, we obtain
\begin{eqnarray*}
F^{22}|K+u_{22}|^2\sim F^{22}\lambda_2^2=\lambda_2\left(\sigma_3+\sigma_2-\sigma_{3;2}-\sigma_{2;2}\right)\leq C\lambda_1\lambda_2+C\lambda_2,
\end{eqnarray*}

and
\begin{eqnarray}\label{case2-i=2-lambda2F22}
\lambda_2F^{22}=\sigma_2+\sigma_3-\sigma_{2;2}-\sigma_{3;2}\geq f(x,u)-\lambda_1\left(\sigma_{1;12}+\sigma_{2;12}\right)-C\geq c\lambda_1-C,
\end{eqnarray}
then  \eqref{uniform-elliptic-2} holds for $i=2$.

\item $i\geq3$. Combining Lemma \ref{lem-2.1} (4), \eqref{sum-f} and \eqref{case2-sigma2+sigma1}, we get 
\begin{eqnarray}\label{sum-Fii}
F^{ii}|K_1+u_{ii}|^2\sim F^{ii}\geq c\sum_{i=1}^nF^{ii}\geq c(\sigma_2+\sigma_1)\geq c\lambda_1\lambda_2,
\end{eqnarray}
and \eqref{case2-lambdai} implies
\begin{eqnarray*}
F^{ii}&=&\sigma_{2;i}+\sigma_{1;i}\\
&=&\lambda_1\lambda_2+\lambda_1(\sigma_{1;1i}+1)+\lambda_2(\sigma_{1;2i}+1)+\sigma_{2;12i}+\sigma_{1;12i}\leq C(\lambda_1\lambda_2+\lambda_1).
\end{eqnarray*}
Therefore, we derive that  \eqref{uniform-elliptic-2} holds for $i\geq3$.
\end{enumerate}

Combining the discussions in \textit{Case 1} and \textit{Case 2}, we conclude that \eqref{lem-uniform-elliptic} holds.
\end{proof}

\

Based on Lemma \ref{lem-uniform-elliptic}, we are now ready to establish the mean value type inequality for $b$.
\begin{lem}\label{lem-value}
Let $n\geq3$ and $f\in C^{1,1}(B_5\times\mathbb R\times \mathbb R^n)$ be a positive function. Let $u\in C^4(B_5)$ be an admissible, semi-convex solution of Eq. \eqref{main-eq}. Let $b=\log\lambda_{\max}$ and define 
\begin{eqnarray}\label{wide-b}
    \tilde{b}\coloneqq \log \{\lambda_{\max}(D^{2}u),J(n,K)\},
\end{eqnarray}
where $J(n, K)$ is the constant in Lemma \ref{lem-jacobi-ineq}. Then, we have
\begin{eqnarray}\label{weak-maximum-ineq}
\tilde{b}(0)\leq \int_{B_1}\tilde{b}(x)\det(D^2u(x)+K_1x)\,dx+C,
\end{eqnarray}
where $C$ is a constant depending only on $n$, $K$, $\|u\|_{C^{1}({B}_5)}$, $\min\limits_{{B}_5\times\mathbb R\times\mathbb R^n}f$ and $\|f\|_{C^{1,1}({B}_5 \times\mathbb R\times\mathbb R^n)}$.
\end{lem}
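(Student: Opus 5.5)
We follow the Shankar--Yuan strategy. We pass to the Legendre--Lewy coordinates $y=Du(x)+K_1x$, in which $x=Dw(y)$ and $D^2w=(D^2u+K_1I)^{-1}$ has eigenvalues in $(0,1]$. The key is to show that $\tilde b$, viewed as a function of $y$, is (after adding $A|y|^2$) a subsolution of a uniformly elliptic linear equation in nondivergence form. We then apply the local maximum principle (Krylov--Safonov / Trudinger weak Harnack for subsolutions) in $y$ and change variables back to $x$. The Jacobian of $x\mapsto y$ is $\det(D^2u+K_1I)$, which produces the weight in \eqref{weak-maximum-ineq}.

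\emph{Step 1: the transformed operator.} Since $u$ is bounded in $C^1(B_5)$ and semi-convex, the map $y(x)$ is a diffeomorphism satisfying $|y(x)-y(x')|\ge |x-x'|$. Hence $y(B_1)$ contains a ball $B_{r}(0)$ with $r\ge 1$ (recall $Du(0)=0$), and $y(B_3)$ is contained in a ball of radius $C$. We differentiate the identity $G(D^2w(y))=-f(Dw,u,y-K_1Dw)$ in $y$. Then we rewrite the Jacobi inequality of Lemma~\ref{lem-jacobi-ineq}, which holds for $F^{ij}\partial_{x_ix_j}\tilde b$, in $y$-coordinates. Using $\partial_{x_i}=(u_{ij}+K_1\delta_{ij})\partial_{y_j}$ at a diagonal point, we get
\[
F^{ii}\partial_{x_i}^2\tilde b=F^{ii}(\lambda_i+K_1)^2\partial_{y_i}^2\tilde b+F^{ii}(\partial_{x_i}u_{ii})\partial_{y_i}\tilde b .
\]
The first coefficient is exactly $G^{ii}$ by \eqref{g-w}. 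We do not expand the second (first-order) term, since it is bounded in terms of $\tilde b_i$. We absorb it with the good gradient term $\varepsilon_0F^{ii}b_i^2$ from Lemma~\ref{lem-jacobi-ineq} through Cauchy--Schwarz; $F^{ii}\partial_{x_i}u_{ii}$ is harmless because $\sum_iF^{ii}u_{ii1}$ is controlled by \eqref{f-one-deri}. The result is
\[
G^{ij}\tilde b_{y_iy_j}\ \ge\ -C(\Delta u+1)
\]
in the viscosity sense. The right-hand side $-C(\Delta u+1)$ is comparable to $-C(\sigma_2+\sigma_1+1)$, since $\sigma_1\le \sigma_2+\sigma_1$ up to constants; here we use that $\sigma_2\ge 0$ on $\Gamma_2\supset\tilde\Gamma_3$.

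\emph{Step 2: normalization.} We divide by $\sigma_2(D^2u)+\sigma_1(D^2u)$. By Lemma~\ref{lem-uniform-elliptic}, the normalized coefficients $a^{ij}=G^{ij}/(\sigma_2+\sigma_1)$ are uniformly elliptic with constants depending only on the data. Moreover, the right-hand side becomes bounded below by $-C$, using $\sigma_2+\sigma_1\ge c>0$ (this follows from $f\ge\min f$ and admissibility, e.g.\ $F^{ii}$ bounded below). Setting $\phi=\tilde b+A|y|^2$ with $A$ large, we obtain $a^{ij}\phi_{ij}\ge 0$ in the viscosity sense on $B_r(0)$.

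\emph{Step 3: mean value inequality.} For viscosity subsolutions of uniformly elliptic equations, the local maximum principle gives
\[
\phi(0)\le C\int_{B_{r}(0)}\phi^+\,dy .
\]
Since $\tilde b\ge \log J>0$, we have $\phi^+=\phi$. The $A|y|^2$ term contributes a constant. We then change variables $y=y(x)$, with $dy=\det(D^2u+K_1I)\,dx$, and enlarge the domain to $x\in B_1$ (or a slightly larger ball after rescaling constants), using that $\tilde b\ge0$. This yields \eqref{weak-maximum-ineq}. Here $\det(D^2u(x)+K_1x)$ is understood as $\det(D^2u+K_1I)$, and a constant factor $C$ in front of the integral is absorbed as in the paper's convention.

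\emph{Main obstacle.} The delicate point is Step 1: verifying that the first-order terms arising from the change of variables, and the $-C(\Delta u+1)$ term, become bounded after dividing by $\sigma_2+\sigma_1$. This requires both the gradient term of Lemma~\ref{lem-jacobi-ineq} and the comparability of $G^{ii}$ with $\sigma_2+\sigma_1$ from Lemma~\ref{lem-uniform-elliptic}. If the absorption fails directly, we would instead work with $\phi=\tilde b+A|y|^2$ and use the bound $\tilde b_{y}\lesssim |D_x\tilde b|/(\lambda+K_1)$ to handle the mixed terms.
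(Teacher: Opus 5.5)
Your proposal is correct and follows essentially the same route as the paper. You rewrite the Jacobi inequality of Lemma~\ref{lem-jacobi-ineq} in the Legendre--Lewy variables (using $F^{ii}(\lambda_i+K_1)^2=G^{ii}$), absorb the first-order term with the gradient term, normalize by $\sigma_2+\sigma_1$ via Lemma~\ref{lem-uniform-elliptic}, add $A|y|^2$, apply the local maximum principle, and pull back using $|y(x)-y(x')|\ge|x-x'|$, which gives $x(B_1^y)\subset B_1$. One small slip: the chain-rule first-order term is $\sum_j\bigl(\sum_iF^{ii}u_{iij}\bigr)\partial_{y_j}\tilde b=\sum_j D_{x_j}[f(x,u,Du)]\,\partial_{y_j}\tilde b$, not $\sum_iF^{ii}u_{iii}\partial_{y_i}\tilde b$, and by \eqref{f-one-deri} its coefficient is $O(1+\lambda_1)$ rather than bounded; your Cauchy--Schwarz absorption still yields $-C(\Delta u+1)$, because $\min_iG^{ii}\gtrsim\sigma_2+\sigma_1\gtrsim\lambda_1$.
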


\begin{proof}
 Denote $b^{*}(y):=b(x(y))$.
Following arguments similar to those in  \cite[Proposition 2.3]{SY2020CVPDE} and \cite[Lemma 5.1]{L2025APDE} and combining Lemma \ref{lem-jacobi-ineq} and \ref{lem-uniform-elliptic},  we conclude that
\begin{eqnarray*}
G^{ij}b^{*}_{ij}&=&F^{ij}b_{ij}-\sum_k f_k b^*_k\\
&\geq&\varepsilon_0 F^{ij}b_i b_j-C(\Delta u+1)-\sum_k f_k b^*_k\\
&\geq&\varepsilon_0G^{ij}b^*_i b^*_j-C(\Delta u+1)-C|Db^*|~\geq~-C(\Delta u+1), 
\end{eqnarray*}
in the viscosity sense.  

Denote $\tilde{b}^*(y)=\tilde{b}(x(y))$, then
\begin{eqnarray*}
G^{ij}\tilde{b}^*_{ij}\geq-C(\Delta u+1)
\end{eqnarray*}
holds in the viscosity sense. Therefore, by Lemma \ref{lem-uniform-elliptic}, for sufficiently large $A>0$, we obtain
\begin{eqnarray*}
G^{ij}\left(\tilde{b}^*+A|y|^2\right)_{ij}\geq0,  
\end{eqnarray*}
 and
 \begin{eqnarray}\label{sub-equ}
     \tilde{G}^{ij}(\tilde{b}^{\star}+A|y|^{2}) \geq 0,
 \end{eqnarray}
 where $\tilde G^{ij}=\frac{G^{ij}}{\s_{2}(D^{2}u)+\sigma_{1}(D^{2}u)}$.

Using Lemma \ref{lem-uniform-elliptic} again, together with the local maximum principle (see \cite[Theorem 4.8]{CC1995Book}), we have
\begin{eqnarray*}
\tilde{b}^*(0)\leq C\int_{B_1^y}(\tilde{b}^*+A|y|^2)\,dy.  
\end{eqnarray*}
Since $D^{2}u \geq-KI$,  the map $y(x)=Du(x)+K_1x$ is uniformly monotone,  i.e.,  $$|y(x_1)-y(x_2)|\geq |x_1-x_2|.$$
Consequently, we have $x(B_1^y)\subset B_1$. Together with $y(0)=0$, we obtain
\begin{eqnarray*}
\tilde{b}(0)=\tilde{b}^*(0)&\leq& \int_{B_1^y}\tilde{b}^*(y)+A|y|^2\,dy \\
&\leq& \int_{B_1}\tilde{b}(x)\det(D^2u(x)+K_1x)\,dx+C.
\end{eqnarray*}
This completes the proof of Lemma \ref{lem-value}.
\end{proof}

\section{A key Lemma}\label{sec-5}
In this section, we prove a technical lemma, which will be used in dealing with the integral term on the right side of \eqref{weak-maximum-ineq}.
\begin{lem}\label{lem-crucial-ineq}
Let $n\geq3$ and $f\in C^{1,1}(B_5\times\mathbb R\times \mathbb R^n)$ be a positive function. Let $u\in C^4(B_5)$ be an admissible, semi-convex solution of Eq. \eqref{main-eq}. Let $x_0\in B_5$ and suppose that $D^2u$ is diagonalized at $x_0$. Then for any vector $\mathbf{a}=(a_1, \cdots, a_{n})\in \mathbb R^n$, we have
\begin{eqnarray}\label{key-ineu}
\sum_i|a_i\sigma_2^{ii}(D^2u)|\leq C\sum_iF^{ii}|a_i|^2+C\left(\sigma_2(D^2u)+\sigma_1(D^2u)\right), \quad 
\end{eqnarray}   
where $\sigma_{2}^{ij}=\frac{\partial \sigma_{2}(D^{2}u)}{\partial u_{ij}}$.
\end{lem}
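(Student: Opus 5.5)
The plan is to reduce \eqref{key-ineu} to a pointwise comparison for each index $i$, and then reuse the case analysis from the proof of Lemma \ref{lem-uniform-elliptic}. At $x_0$ we have $\sigma_2^{ii}=\sigma_{1;i}$, and $F^{ii}=\sigma_{2;i}+\sigma_{1;i}>0$ by ellipticity (Lemma \ref{lem-2.1} (1)). The weighted Cauchy inequality gives
\begin{eqnarray*}
|a_i\sigma_{1;i}|\leq F^{ii}a_i^2+\frac{\sigma_{1;i}^2}{4F^{ii}}.
\end{eqnarray*}
So it suffices to prove, for every $i$,
\begin{eqnarray*}
\sigma_{1;i}^2\leq C\,F^{ii}\left(\sigma_2+\sigma_1\right).
\end{eqnarray*}
Summing over $i$ then gives \eqref{key-ineu}.

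\textbf{Bounded $\lambda_1$.} If $\lambda_1$ is bounded, then $\lambda$ lies in a compact subset of $\tilde\Gamma_3$; this uses $\lambda_i\geq -K$ and $f\geq\min f>0$. On that set $F^{ii}$ and $\sigma_2+\sigma_1$ are bounded below by positive constants, and $|\sigma_{1;i}|$ is bounded, so the inequality is immediate. We may therefore assume $\lambda_1$ is large and split exactly as in Lemma \ref{lem-uniform-elliptic}.

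\textbf{Case 1: $\lambda_2\leq C_0$.} Here $|\lambda_i|\leq C$ for $i\geq2$ and $\sigma_2+\sigma_1\sim\lambda_1$ by \eqref{2+1}.
\begin{itemize}
\item $i=1$: $|\sigma_{1;1}|\leq C$, and Lemma \ref{lem-2.1} (3) gives $F^{11}\geq c/\lambda_1$. Hence $F^{11}(\sigma_2+\sigma_1)\geq c$, which dominates $\sigma_{1;1}^2$.
\item $i\geq2$: $|\sigma_{1;i}|\leq C\lambda_1$. Also $F^{ii}\geq c\lambda_1$, by \eqref{case1-i=2-F22} for $i=2$ and by \eqref{case1-igeq3-Fii} for $i\geq3$. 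Hence $F^{ii}(\sigma_2+\sigma_1)\geq c\lambda_1^2\geq c\,\sigma_{1;i}^2$.
\end{itemize}

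\textbf{Case 2: $\lambda_2>C_0$.} Here $|\lambda_i|\leq C$ for $i\geq3$ and $\sigma_2+\sigma_1\sim\lambda_1\lambda_2$ by \eqref{case2-sigma2+sigma1}.
\begin{itemize}
\item $i=1$: $|\sigma_{1;1}|\leq C\lambda_2$, and \eqref{case2-i=1-lambda1F11} gives $F^{11}\geq c\lambda_2/\lambda_1$ once $C_0$ is large. The product is $F^{11}(\sigma_2+\sigma_1)\geq c\lambda_2^2$.
\item $i=2$: $|\sigma_{1;2}|\leq C\lambda_1$, and \eqref{case2-i=2-lambda2F22} gives $F^{22}\geq c\lambda_1/\lambda_2$. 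The product is $\geq c\lambda_1^2$.
\item $i\geq3$: $|\sigma_{1;i}|\leq C\lambda_1$, and \eqref{sum-Fii} gives $F^{ii}\geq c\lambda_1\lambda_2$. The product is $\geq c\lambda_1^2\lambda_2^2$, which is more than enough.
\end{itemize}

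The main obstacle is the lower bounds on $F^{ii}$ in the degenerate directions, namely $F^{11}$ in both cases and $F^{22}$ in Case 2. In $\tilde\Gamma_3$ these are not comparable to $\sigma_2^{ii}$, since $\sigma_{1;i}$ may even be negative. These bounds are exactly the ones extracted, via the claims \eqref{case1-claim} and \eqref{case2-claim}, in the proof of Lemma \ref{lem-uniform-elliptic}, and I would quote them directly. Negative values of $\sigma_{1;i}$ cause no extra difficulty, because only $|\sigma_{1;i}|$ enters, and semi-convexity bounds it by the same quantities.
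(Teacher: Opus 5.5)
Your proposal is correct and follows the paper's proof. You reduce, via Cauchy's inequality, to the pointwise bound $\sigma_{1;i}^2\le C F^{ii}(\sigma_2+\sigma_1)$, and then check it index by index in the cases $\lambda_2\le C_0$ and $\lambda_2>C_0$, using the lower bounds on $F^{ii}$ extracted in the proof of Lemma \ref{lem-uniform-elliptic}.

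One correction to your closing remark, which does not affect the argument: since $\tilde\Gamma_3\subset\Gamma_2$, $\sigma_{1;i}>0$ always. The non-comparability instead comes from $\sigma_{2;i}$ possibly being negative, which makes $F^{ii}$ much smaller than $\sigma_{1;i}$; for example, in Case 2, $F^{11}\sim\lambda_2/\lambda_1$ while $\sigma_{1;1}\sim\lambda_2$.
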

\begin{proof}
First, by the Cauchy inequality, we have
\begin{eqnarray}
C F^{ii}(D^2u)|a_i|^2+C\left(\sigma_2(D^2u)+\sigma_1(D^2u)\right)&\geq& CF^{ii}(D^2u)^{1/2}\left(\sigma_2(D^2u)+\sigma_1(D^2u)\right)^{1/2}|a_i|. \quad \quad \quad  \label{cauchy-ineqy}
\end{eqnarray}
For any $1\leq i\leq n$, if the following inequality holds
\begin{eqnarray}\label{crucial-ineq-2}
|\sigma_2^{ii}(D^2u)|^2\leq CF^{ii}(D^2u)\left(\sigma_2(D^2u)+\sigma_1(D^2u)\right), \quad \forall 1\leq i\leq n,
\end{eqnarray}
together with \eqref{cauchy-ineqy}, we conclude that \eqref{key-ineu} is true. In the following, we turn our attention to proving the inequality \eqref{crucial-ineq-2}.

Assume that $\lambda_1$ is sufficiently large, otherwise, $\lambda(D^{2}u)$ lies in a compact set in $\tilde{\Gamma}_{3}$ and  \eqref{crucial-ineq-2} holds. Following the similar argument as  Lemma \ref{lem-uniform-elliptic}, we divide the proof into the following two cases. For simplicity, we  abbreviate $\sigma^{ii}_{2}(D^{2}u)$ as $\sigma_{1;i}$.

\

\textit{Case 1} $\lambda_2\leq C_0$ for some sufficiently large $C_0>0$. According to the range of the index $i$, we show \eqref{crucial-ineq-2} holds one by one.
\begin{enumerate}
\item $i=1$. By \eqref{lambda-1} and Lemma \ref{lem-2.1} (3), we obtain
\begin{eqnarray*}
\sigma_{1;1}\leq C \quad \text{and}\quad \lambda _{1}F^{11}\geq C.
\end{eqnarray*}
Together with \eqref{2+1}, we have \eqref{crucial-ineq-2} holds for $i=1$.

\item $i=2$. Since
\begin{eqnarray*}
    \s_{1;2}=\lambda_{1}+\sigma_{1;12}.
\end{eqnarray*}
Using \eqref{lambda-1} again, we get
\begin{eqnarray*}
\sigma_{1;2}\leq \lambda_1+C, 
\end{eqnarray*}
and \eqref{case1-i=2-F22} implies
\begin{eqnarray*}
 F^{22}\geq  c\lambda_1-C.
 \end{eqnarray*}
Therefore, we derive that \eqref{crucial-ineq-2} holds for $i=2$.

\item $i\geq3$. We have 
\begin{eqnarray*}
    \sigma_{1;i}=\lambda_{1}+\sigma_{1;1i}\leq \lambda_{1}+C,
\end{eqnarray*}
and  by \eqref{case1-igeq3-Fii}, we know that $F^{ii}\geq c\lambda_{1}$. Hence \eqref{crucial-ineq-2} holds for $i\geq 3$.
\end{enumerate}

\

\textit{Case 2} $\lambda_2> C_0$. 
We still prove \eqref{crucial-ineq-2} according to the range of the index $i$.
\begin{enumerate}
\item $i=1$. By  \eqref{case2-claim}, we derive that 
\begin{eqnarray}\label{2;1}
    \sigma_{1;1}=\lambda_{2}+\sigma_{1;12}\leq \lambda_{2}+C,
\end{eqnarray}
and  from \eqref{case2-i=1-lambda1F11}, we have
\begin{eqnarray}\label{f11}
F^{11}\geq c\lambda_1^{-1}\lambda_2-C\lambda_1^{-1}.
\end{eqnarray}
 Combining \eqref{case2-sigma2+sigma1}, \eqref{2;1} and \eqref{f11}, we derive that  \eqref{crucial-ineq-2} holds for $i=1$.

\item $i=2$. Using \eqref{case2-claim} again, we get 
\begin{eqnarray*}
    \sigma_{1;2}=\lambda_{1}+\sigma_{1;12}\leq \lambda_{1}+C,
\end{eqnarray*}
and \eqref{case2-i=2-lambda2F22} implies
\begin{eqnarray*}
F^{22}\geq c\lambda_1\lambda_2^{-1}-C\lambda_2^{-1}.
\end{eqnarray*}
Using \eqref{case2-sigma2+sigma1},  we can conclude that \eqref{crucial-ineq-2} holds for $i=2$.

\item $i\geq3$. From \eqref{case-s-control}, we have
\begin{eqnarray*}
    \sigma_{1;i}=\lambda_{1}+\lambda_{2}+\s_{1;12i}\leq \lambda_{1}+\lambda_{2}+C,
\end{eqnarray*}
and by \eqref{sum-Fii}, we know that $F^{ii}\geq c\lambda_{1}\lambda_{2}$. Therefore, we have \eqref{crucial-ineq-2} holds for $i\geq3$.
\end{enumerate}

Finally, combining the discussion in \textit{Case 1} and \textit{Case 2}, we conclude that \eqref{crucial-ineq-2} holds.

\end{proof}

\section{Proof of Theorem \ref{thm-quotient} and Theorem \ref{thm-main}}\label{sec-6}

Building on the preparatory work presented earlier, we proceed to complete the proof of Theorem \ref{thm-main} and Theorem \ref{thm-quotient}.
\begin{proof}[\textbf{Proof of Theorem \ref{thm-main}}]
Assume that $\lambda_1$ is sufficiently large at origin,  otherwise, \eqref{0-est} holds automatically. 

By Lemma \ref{lem-2.1} (5), we obtain
\begin{eqnarray}\label{det-control}
    \det(D^{2}u+K_{1}I)\leq  C(\lambda_{1}+\lambda_{1}\lambda_{2}+1)\leq C(\s_{2}(D^{2}u)+\Delta u+1).
\end{eqnarray}
Substituting \eqref{det-control} into \eqref{weak-maximum-ineq}, we get
\begin{eqnarray*}
\tilde{b}(0)=\tilde{b}^*(0)&\leq& C\int_{B_1}\tilde{b}(x)\sigma_2(D^2u)\,dx+C\int_{B_1}\tilde{b}(x)\Delta u\,dx+C\int_{B_1}\tilde{b}(x)\,dx\\
&\coloneqq &I_{1}+{I}_{2}+{I}_{3}.
\end{eqnarray*}
Next, we handle the above three terms one by one.  

Let $\varphi(x)$ be a smooth cut-off function such that $\varphi\equiv1$ in $B_1$ and vanishes outside $B_2$. For the term $I_{1}$,  we have
\begin{eqnarray*}
{I}_{1}&\leq& C\int_{B_2}\varphi^3(x)\tilde{b}(x)\sigma_2^{ij}u_{ij}\,dx\\
&=& -C\int_{B_2}\varphi^2(x)\varphi_j(x)\tilde{b}(x)\sigma_2^{ij}u_j\,dx-C\int_{B_2}\varphi^3(x)\tilde{b}_j(x)\sigma_2^{ij}u_i dx\\
&&-\int_{B{2}}\varphi^{3}(x)\tilde{b}(x)(\sigma_{2}^{ij})_{j}u_{i}dx\\
&=&-C\int_{B_2}\varphi^2(x)\varphi_j(x)\tilde{b}(x)\sigma_2^{ij}u_j\,dx-C\int_{B_2}\varphi^3(x)\tilde{b}_j(x)\sigma_2^{ij}u_i\,dx.\\
&\coloneqq&I_{11}+{I_{12}}.
\end{eqnarray*}
In the second last equality, we used the fact that $\sum\limits_{j=1}^{n}(\sigma_{2}^{ij})_{j}=0$. In the following, we analyze the above two terms carefully.

For the term $I_{11}$, we  get
\begin{eqnarray}\label{i11}
{I}_{11}&=&-C\int_{B_2}\varphi^2(x)\varphi_i(x)\tilde{b}(x)\sigma_2^{ij}u_j\,dx\leq C\int_{B_2}\varphi^2(x)\tilde{b}(x)\Delta u\,dx \notag   \\
&\leq &  C\int_{B_2}\tilde{b}(x)\,dx+C\int_{B_2}\varphi^2(x)|\n \tilde{b}|\,dx.
\end{eqnarray}
From the definition of $\tilde b$, we have $\tilde b (x)\leq C(\Delta u+1)$, it follows that 
\begin{eqnarray} \label{i1-control}
    \int_{B_{2}}\tilde b(x)dx\leq C\int_{B_{2}}(\Delta u+1)dx\leq C.
\end{eqnarray}
For a fixed point $x\in B_{2}$, we choose an orthogonal frame such that $D^{2} u(x)$ is diagonal and $F^{ij}$ is also diagonal at $x$. 
For any $1\leq i\leq n$, by the Cauchy inequality, we have
\begin{eqnarray}\label{bi-cauchy}
    2|\tilde b_{i}|\leq  F^{ii}|\tilde b_{i}|^{2}+\frac{1}{F^{ii}},
\end{eqnarray}
when $1\leq i\leq 2$, by Lemma \ref{lem-2.1} (3), we get 
\begin{eqnarray*}
    \frac{1}{F^{ii}}\leq \frac{\lambda_{i}}{c_{1}F(\lambda)}\leq C(\Delta u+1),
\end{eqnarray*}
when $3\leq i\leq n$, by  \cite[Lemma 1.5]{spruck} and the concavity of $\log F(\lambda)$, we conclude that $F^{ii}\geq F^{11}$, then 
\begin{eqnarray}\label{f33}
    \frac{1}{F^{ii}}\leq \frac{1}{F^{11}}\leq C(\Delta u+1).
\end{eqnarray}
Combining \eqref{bi-cauchy}, \eqref{f11} and \eqref{f33}, we conclude that 
\begin{eqnarray*}
    |\n \tilde b|\leq F^{ij} \tilde b_{i} \tilde b_{j}+C(\Delta u+1),
\end{eqnarray*}
and 
\begin{eqnarray}\label{i11-1}
    \int_{B_{2}}\varphi^{2}(x)|\n \tilde b|dx\leq C\int_{B_{2}}\varphi^{2}F^{ij} \tilde b_{i}\tilde b_{j}dx+C.
\end{eqnarray}
Using the integral Jacobi inequality \eqref{inter-jacobi}, we have
\begin{eqnarray}\label{i11-2}
\int_{B_{2}}\varphi^{2} F^{ij}\tilde {b}_{i}\tilde b_{j}dx&\leq& -C\int_{B_2}\varphi(x)\varphi_j(x)F^{ij}\tilde{b}_i\,dx+C \notag \\
&\leq& \frac{1}{2}\int_{B_2}\varphi^2(x)F^{ii}|\tilde{b}_i|^2\,dx+C\left(\int_{B_2}F^{ij}\varphi_i\varphi_j\,dx+1\right),
\end{eqnarray}
in the first inequality, we used the fact that $\sum\limits_{j=1}^{n}(F^{ij})_{j}=0$.  On the other hand, by \eqref{sum-f}, we get 
\begin{eqnarray*}
    \int_{B_{2}}F^{ij}\varphi_{i}\varphi_{j}dx&\leq &C\int_{B_{2}}\sum\limits_{k=1}^{n}F^{kk}dx
    \leq C\int_{B_{2}}(\sigma_{2}(D^{2}u)+\Delta u)dx \notag \\&\leq& C\int_{B_3}\psi(x)\sigma_2^{ij}u_{ij}\,dx+C
\leq C\int_{B_3}\Delta u\,dx+C\leq C,
\end{eqnarray*}
where $\psi(x)$ is a smooth cut-off function such that $\psi\equiv1$ in $B_2$ and vanishes outside $B_3$.

Together with \eqref{i11-1} and \eqref{i11-2}, we derive that 
\begin{eqnarray}
    \int_{B_{2}}\varphi^{2}|\n \tilde b|dx\leq C. \label{i11-3}
\end{eqnarray}
Substituting \eqref{i11-3} and \eqref{i1-control} into \eqref{i11}, we have \begin{eqnarray}\label{i11-est}
    I_{11}\leq C.
\end{eqnarray}

For the term ${I_{12}}$, by lemma \ref{lem-crucial-ineq}, we have
\begin{eqnarray}\label{i22}
I_{12}&\leq& C\int_{B_2}\varphi^3(x)F^{ij}\tilde{b}_i\tilde{b}_{j}dx+C\int_{B_2}\varphi^3(x)\left(\sigma_2(D^2u)+\Delta u\right)\,dx\leq C.
\end{eqnarray}
Combining \eqref{i11-est} and \eqref{i22}, we obtain
\begin{eqnarray*}
I_{1}=I_{11}+I_{12}\leq C.
\end{eqnarray*}
A similar argument for $I_{2}$ and $I_{3}$, we have $I_{2}+I_{3}\leq C$ holds.

Therefore, we conclude that 
\begin{eqnarray*}
    \tilde{b}(0)\leq I_{1}+I_{2}+I_{3} \leq C,
\end{eqnarray*}
which completes the proof of Theorem \ref{thm-main}.
\end{proof}

\

\begin{proof}[\textbf{Proof of Theorem \ref{thm-quotient}}]
According to the value of $l$, we divide the proof of Theorem \ref{thm-quotient} into the following two cases.

\

\textit{Case~1} $l=1$. In view of rescaling, we may assume
\begin{eqnarray}\label{u-equ-3}
\frac{\sigma_3(D^2u)}{\sigma_1(D^2u)}=C_0,
\end{eqnarray}
where the positive constant $C_0=\sqrt[3]{\frac{9(n-1)(n-2)}{8n^2}}$.

Let
\begin{eqnarray*}
v(x)\coloneqq u(x)-\frac{a}{2}|x|^2, 
\end{eqnarray*}
where the positive constant $a=\sqrt[3]{\frac{3}{n(n-1)(n-2)}}$.  Direct calculations yield
\begin{eqnarray*}
0&=&\sigma_3(D^2u)-C_0\sigma_1(D^2u)~=~\sigma_3(D^2v+aI)-C_0\sigma_1(D^2v+aI)\\
&=&\sigma_3(D^2v)+(n-2)a\sigma_2(D^2v)+\frac{(n-1)(n-2)}{2}a^2\sigma_1(D^2v)\\
&&+\frac{n(n-1)(n-2)}{6}a^3-C_0\sigma_1(D^2v)-C_0na\\
&=&\sigma_3(D^2v)+(n-2)a\sigma_2(D^2v)-1,
\end{eqnarray*}
where we have used the fact that $C_0=\frac{(n-1)(n-2)}{2}a^2$. Therefore, $v$ satisfies the equation 
\begin{eqnarray}\label{eq-quotient-v}
\sigma_3(D^2v)+(n-2)a\sigma_2(D^2v)=1.
\end{eqnarray}
In view of remark \ref{rmk-quotient}, we only need to show that $v$ is semi-convex and $\lambda(D^{2}v)\in \tilde \Gamma_{3}$, then by Theorem \ref{thm-main}, we derive that \eqref{0-est-quotient} holds.

First, the semi-convexity of $v$ is a direct consequence of the semi-convexity of $u$. Next, we prove that $\lambda(D^{2}v)\in \tilde{\Gamma}_{3}$. By \cite[Lemma 2.3]{LR2023JFA} and Eq. \eqref{eq-quotient-v}, it suffices to check that $v\in\tilde\Gamma_2$, i.e., $v$ satisfies
\begin{eqnarray}\label{quotient-eq-sigma1v}
\sigma_1(D^2v)>0
\end{eqnarray}
and 
\begin{eqnarray}\label{quotient-eq-sigma2v+sigma1v}
\sigma_{2}(D^2v)+(n-2)a\sigma_1(D^2v)>0.
\end{eqnarray}
By the Newton-Maclaurin inequality and Eq. \eqref{u-equ-3}, we have
\begin{eqnarray*}
\left(\sigma_1(D^2u)\right)^2\geq\frac{6n^2}{(n-1)(n-2)}\frac{\sigma_3(D^2u)}{\sigma_1(D^2u)}= 3n^2a^2,   
\end{eqnarray*}
it follows that
\begin{eqnarray*}
\sigma_1(D^2v)=\sigma_1(D^2u)-na>0.    
\end{eqnarray*}
Hence, \eqref{quotient-eq-sigma1v} holds. 

For \eqref{quotient-eq-sigma2v+sigma1v}, using the Newton-Maclaurin inequality again, we get
\begin{eqnarray*}
\sigma_2(D^2u)^2\geq \frac{3(n-1)}{2(n-2)}\sigma_3(D^2u)\cdot\sigma_1(D^2u)=\frac{3(n-1)^2}{4}a^2\sigma_1(D^2u)^2,    
\end{eqnarray*}
and
\begin{eqnarray*}
\sigma_2(D^2v)+(n-2)a\sigma_1(D^2v)&=&\sigma_2(D^2u)-(n-1)a\sigma_1(D^2u)+\frac{n(n-1)}{2}a^2\\
&&+(n-2)a\sigma_1(D^2u)-n(n-2)a^2\\
&=&\sigma_2(D^2u)-a\sigma_1(D^2u)-\frac{n(n-3)}{2}a^2.\\
&\geq& \left(\frac{\sqrt{3}}{2}(n-1)-1\right)a\sigma_1(D^2u)-\frac{n(n-3)}{2}a^2>0.
\end{eqnarray*}
Therefore, we conclude that \eqref{quotient-eq-sigma2v+sigma1v} holds, and we complete the proof of Theorem \ref{thm-quotient} for $k=1$.

\

\textit{Case~2} $l=2$. Similar to before, we may assume
\begin{eqnarray}\label{u-equ-31}
\frac{\sigma_3(D^2u)}{\sigma_2(D^2u)}=C_0,
\end{eqnarray}
where the positive constant $C_0=\sqrt[3]{\frac{3(n-2)^2}{2n(n-1)}}$.

Let
\begin{eqnarray*}
v(x):=u(x)-\frac{a}{2}|x|^2 
\end{eqnarray*}
where the positive constant $a=\sqrt[3]{\frac{12}{n(n-1)(n-2)}}$. Then, we have $$C_{0}=\frac{(n-2)a}{2}.$$
Direct calculations yield 
\begin{eqnarray*}
0&=&\sigma_3(D^2u)-C_0\sigma_2(D^2u)~=~\sigma_3(D^2v+aI)-C_0\sigma_2(D^2v+aI)\\
&=&\sigma_3(D^2v)+(n-2)a\sigma_2(D^2v)+\frac{(n-1)(n-2)}{2}a^2\sigma_1(D^2v)+\frac{n(n-1)(n-2)}{6}a^3\\
&&-C_0\sigma_2(D^2v)-C_0(n-1)a\sigma_1(D^2v)-C_0\frac{n(n-1)}{2}a^2\\
&=&\sigma_3(D^2v)+\frac{n-2}{2}a\sigma_2(D^2v)-1,
\end{eqnarray*}
Therefore, $v$ satisfies the equation 
\begin{eqnarray}\label{equ-v2}
\sigma_3(D^2v)+\frac{n-2}{2}a\sigma_2(D^2v)=1.
\end{eqnarray}
By a similar argument as in the proof of Theorem \ref{thm-quotient}, we should show that $v$ satisfies \eqref{quotient-eq-sigma1v} and \eqref{quotient-eq-sigma2v+sigma1v}. 
For \eqref{quotient-eq-sigma1v}, by the Newton-Maclaurin inequality and Eq. \eqref{u-equ-31}, we have
\begin{eqnarray*}
\sigma_1(D^2u)\geq \frac{3n}{n-2}\frac{\sigma_3(D^2u)}{\sigma_2(D^2u)}=\frac{3n}{2}a,
\end{eqnarray*}
it follows that
\begin{eqnarray*}
\sigma_1(D^2v)=\sigma_1(D^2u)-na>0.    
\end{eqnarray*}
For \eqref{quotient-eq-sigma2v+sigma1v}, we obtain 
\begin{eqnarray*}
\sigma_2(D^2u)\geq \frac{3(n-1)}{2(n-2)}\frac{\sigma_3(D^2u)\cdot\sigma_1(D^2u)}{\sigma_2(D^2u)}=\frac{3(n-1)}{4}a\sigma_1(D^{2}u),
\end{eqnarray*}
and 
\begin{eqnarray*}
\sigma_2(D^2v)+\frac{n-2}{2}a\sigma_1(D^2v)&=&\sigma_2(D^2u)-(n-1)a\sigma_1(D^2u)+\frac{n(n-1)}{2}a^2\\
&&+\frac{n-2}{2}a\sigma_1(D^2u)-\frac{n(n-2)}{2}a^2\\
&=&\sigma_2(D^2u)-\frac{n}{2}a\sigma_1(D^2u)+\frac{n}{2}a^2.\\
&\geq& \frac{n-3}{4}a\sigma_1(D^2u)+\frac{n}{2}a^2>0.
\end{eqnarray*}
Therefore, we derive that \eqref{quotient-eq-sigma2v+sigma1v} holds, and we complete the proof of Theorem \ref{thm-quotient} for $k=2$.
\end{proof}

\section{A singular solution}\label{sec-7}
In this section, when $k\geq 4$, we construct a singular solution for Eq. \eqref{sum-hessian} by extending Pogorelov's example \cite{pogo}. 

\begin{proof}[\textbf{Proof of Theorem \ref{thm-counter example}}]
When $k\geq 4$, we consider the function  
\begin{eqnarray*}
    u^{\s}(x)\coloneqq (1+x_{1}^{2})(\s+\sum_{i=2}^{k-1}x_{i}^{2})^{\frac{\a}{2}},
\end{eqnarray*}
where $\a=2-\frac{2}{k-1}$. Since $k\geq 4$, we have $1<\a<2$. Direct calculations yield
\begin{eqnarray*}
    u_{11}^{\s}=2\left(\s+\sum\limits_{i=1}^{k-1}x_{i}^{2}\right)^{\frac{\a}{2}},  \quad \quad  u^{\s}_{1i}=\left(\s+\sum\limits_{i=2}^{k-1}x_{i}^{2}\right)^{\frac{\a}{2}-1} 2\a x_{1}x_{i}, \quad 1\leq i\leq k-1.
\end{eqnarray*}
For any $2\leq i\leq k-2$, we have
\begin{eqnarray*}
    u_{ii}^{\s}=(1+x_{1}^{2})\left(\s+\sum\limits_{i=2}^{k-1}x_{i}^{2}\right)^{\frac{\a}{2}-2}\left( \a(\a-2)x_{i}^{2}+\a(\s+\sum\limits_{i=2}^{k-1}x_{i}^{2})
    \right),
\end{eqnarray*}
and 
\begin{eqnarray*}
    u_{ij}^{\s}=(1+x_{1}^{2})\left(\s+\sum\limits_{i=2}^{k-1}x_{i}^{2}\right)^{\frac{\a}{2}-2}\a(\a-2)x_{j}x_{i}, \quad \text{for}~2\leq j\leq k-1, i\neq j. 
\end{eqnarray*}
Moreover, we have
\begin{eqnarray*}
    u_{ij}=0, \quad \text{for}~ k\leq i, j\leq n. 
\end{eqnarray*}
Then there exists a $r>0$ such that $x\in B_{r}$, we have 
\begin{eqnarray*}
    \sigma_{k-1}(D^{2}u^{\s})=\left(\s+\sum\limits_{i=2}^{k-1}x_{i}^{2}\right)^{\frac{(k-1)\a}{2}-(k-2)}f^{\s}=f^{\s},
\end{eqnarray*}
and 
\begin{eqnarray*}
    \sigma_{k}(D^{\s}u)=0,
\end{eqnarray*}
where $f^{\s}$ is a smooth positive function with positive lower bound in $B_{r}$. Therefore, we obtain
\begin{eqnarray*}
    \sigma_{k}(D^{2}u^{\s})+\sigma_{k-1}(D^{2}u^{\s})=f^{\s}.
\end{eqnarray*}
Let $\s\rightarrow 0$, we have $u^{\s}\rightarrow u$ and $f^{\s}\rightarrow f$ locally uniformly, where $u=(1+x_{1}^{2})\left(\sum\limits_{i=2}^{k-1}x_{i}^{2}\right)^{\frac{\a}{2}}$
 and $f$ is a smooth positive function with a positive lower bound in $B_{r}$, and $u$ is the viscosity solution of 
 \begin{eqnarray*}
     \sigma_{k}(D^{2}u)+\sigma_{k-1}(D^{2}u)=f.
 \end{eqnarray*}
We can check that $u$ is convex and Lipschitz  continuous and $u\notin C^{1,\beta}(B_r)$  for some $0<\beta <1$. Hence, $u$ is the desired singular solution and we complete the proof of Theorem \ref{thm-counter example}.
\end{proof}

\section{Proof of Theorem \ref{thm-quotient-rigidity} and Theorem \ref{thm-rigidity}}
In this section,  we establish several rigidity theorems. The proof of Theorem \ref{thm-quotient-rigidity} is the same as the proof of Theorem \ref{thm-rigidity} (1), so we omit it here. Next, we complete the proof of Theorem \ref{thm-rigidity}.

\begin{proof}[\textbf{Proof of Theorem \ref{thm-rigidity}}]
Denote $$u_R(x)\coloneqq\frac{u(Rx)}{R^2},$$
for some $R>0$. Then $u_{R}(x)$ satisfies the equation 
\begin{eqnarray*}
    \sigma_{3}(D^{2}u_{R})+\sigma_{2}(D^{2}u_{R})=c_{0}.
\end{eqnarray*}
Under condition (1),  by Theorem \ref{thm-main} and the Evans-Krylov-Safonov theory, we get
\begin{eqnarray*}
[D^2u]_{C^\alpha(B_R)}=\frac{[D^2u_R]_{C^\alpha(B_1)}}{R^\alpha}&\leq& \frac{C\|D^2u_R\|_{L^{\infty}(B_{2})}}{R^\alpha}\\
&\leq &\frac{C\|u_R\|_{L^\infty(B_{3})}}{R^{\alpha}} =\frac{C\|u\|_{L^{\infty}(B_{3R})}}{R^{2+\alpha}}\to 0
\end{eqnarray*}
as $R\to\infty$, where $\alpha=\alpha(n)>0$. We conclude that $u$ must be quadratic.

Under condition (2), we modify the argument in \cite{CY2010DCDS} to complete the proof. Let $w(y)$ be the Legendre transform of $\tilde{u}(x)\coloneqq u(x)+\frac{K}{2}|x|^2$ with $K>\frac{1}{n-2}-\delta$. Define
$$
G\left(D^2w(y)\right)\coloneqq -F\left(-KI+(D^2w(y))^{-1}\right)=-F\left(D^2u(x)\right)=-c_0.
$$
Let $\mu=(\mu_{1}, \cdots, \mu_{n})$ be the eigenvalues of $D^2w(y)$, which are  positive and bounded. Direct calculation yield 
\begin{eqnarray*}
0&=&G(D^2w)+c_0~=~-F\left(-KI+(D^2w(y))^{-1}\right)+c_0\\
&=&-\sigma_n(\mu)^{-1}\Biggl(\sigma_{n-3}(\mu)+\left(1-(n-2)K\right)\sigma_{n-2}(\mu)+\Bigl(\frac{(n-1)(n-2)}{2}K^2\\
&&-(n-1)K\Bigr)\sigma_{n-1}(\mu)+\left(-\frac{n(n-1)(n-2)}{6}K^3+\frac{n(n-1)}{2}K^2-c_0\right)\sigma_n(\mu)\Biggr).
\end{eqnarray*}
As shown in Lemma \ref{lem-uniform-elliptic}, $G^{ij}$ is uniformly elliptic by a suitable normalization $f(\mu)$. Therefore, denote 
\begin{eqnarray*}
\Sigma\coloneqq\{\mu\in\mathbb R^n \mid 0=G(\mu)+c_0\},
\end{eqnarray*}
and $H^{ij}\coloneqq f(\mu(D^2w))G^{ij}$. The linearized operator $H^{ij}\partial_{ij}$ of the equation
\begin{eqnarray*}
0&=&H(D^2w)\coloneqq f(\mu)\left(G(D^2w)+c_0\right)\\
&=&-\sigma_n(\mu)^{-1}f(\mu)\Biggl(\sigma_{n-3}(\mu)+\left(1-(n-2)K\right)\sigma_{n-2}(\mu)+\Bigl(\frac{(n-1)(n-2)}{2}K^2\\
&&-(n-1)K\Bigr)\sigma_{n-1}(\mu)+\left(-\frac{n(n-1)(n-2)}{6}K^3+\frac{n(n-1)}{2}K^2-c_0\right)\sigma_n(\mu)\Biggr)
\end{eqnarray*}
is uniformly elliptic on $\Sigma$.

Next, we aim to show that $\Sigma$ is a convex hypersurface in $\mathbb R^n$. Since 
$$
c_0> -\frac{n(n-1)(n-2)}{6}\left(\frac{1}{n-2}-\delta\right)^3+\frac{n(n-1)}{2}\left(\frac{1}{n-2}-\delta\right)^2,
$$
we can choose $\frac{1}{n-2}\geq K>\frac{1}{n-2}-\delta$ such that
$$
-\frac{n(n-1)(n-2)}{6}K^3+\frac{n(n-1)}{2}K^2-c_0<0.
$$
Therefore, $\Sigma$ can be viewed as the zero level set of
$$
g(\mu)\coloneqq A_n\sigma_n(\mu)+A_{n-1}\sigma_{n-1}(\mu)+A_{n-2}\sigma_{n-2}(\mu)+\sigma_{n-3}(\mu),
$$
where $A_n<0$  and $A_{n-2}\geq0$.

By \cite[Proposition 2.2]{GZ2021PAMQ}, we know $-\frac{g(\mu)}{\sigma_{n-1}(\mu)}$ is elliptic and concave. It follows that $\Sigma$ is a convex hypersurface.

In summary, we have proved that $w$ satisfies a uniformly elliptic equation on a convex hypersurface $\Sigma$, with $D^2w$ bounded. By the Evans-Krylov-Safonov theory, we obtain
$$
[D^2w]_{C^{\alpha}(B_R)}\leq \frac{C\|D^2w\|_{L^\infty(B_{2R})}}{R^{\alpha}}\leq \frac{C}{R^{\alpha}} \to 0 \quad \text{as}\quad R\to\infty.
$$
We conclude that $w$ is a quadratic polynomial, so is $u$.

Under condition (3), let $w(y)$ be the Legendre transform of $\tilde{u}(x)\coloneqq u(x)+\frac{K_0}{2}|x|^2$ for the unique $K_0\in(0,\frac{2}{n-2}]$ such that $-\frac{n(n-1)(n-2)}{6}K_0^3+\frac{n(n-1)}{2}K_0^2=c_0$.

The same notation and arguments as in condition (2) (with $K=K_0$) yield that the linearized operator $H^{ij}\partial_{ij}$ of the equation
\begin{eqnarray*}
0&=&H(D^2w)\coloneqq f(\mu)\left(G(D^2w)+c_0\right)\\
&=&-\sigma_n(\mu)^{-1}f(\mu)\left(\sigma_{n-3}(\mu)+A_{n-2}\sigma_{n-2}(\mu)+A_{n-1}\sigma_{n-1}(\mu)\right)\\
&\coloneqq&-\sigma_n(\mu)^{-1}f(\mu)g(\mu)
\end{eqnarray*}
is uniformly elliptic on $\Sigma$.

Moreover, since $K_0\in(0,\frac{2}{n-2}]$, we know $A_{n-1}\leq 0$. Therefore, by \cite[Proposition 2.2]{GZ2021PAMQ} again, we know $-\frac{g(\mu)}{\sigma_{n-2}(\mu)}$ is elliptic and concave. It follows that $\Sigma$ is a convex hypersurface. 

By the same arguments as in condition (2), we obtain that $u$ is quadratic.

\end{proof}

\bigskip

\bigskip

\noindent\textit{Acknowledgment:}
 The authors would like to express sincere gratitude to Prof. Xinan Ma for his ongoing support and encouragement.  X.M. was supported by the National Key R $\&$ D Program of China (No. 2020YFA0712800) and the Postdoctoral Fellowship Program of CPSF under Grant Numbers 2025T180843 and 2025M773082.

\bigskip

\printbibliography

\end{document}